\documentclass{article}
\usepackage[utf8]{inputenc}
\usepackage[margin=1.5in]{geometry}
\usepackage{todonotes}
\usepackage{caption}
\captionsetup[figure]{font=small}






\usepackage{nicefrac}
\usepackage{amsmath,enumerate}
 \usepackage{amsthm}
\usepackage{amsfonts}
\usepackage{amssymb}
\usepackage{mathrsfs}
\usepackage{pgfplots}
\usepackage{mathtools}
\usepackage{comment}
\usepackage{lmodern}
\usepackage{subfig}
\usepackage{caption}
\captionsetup{font=small}

\usepackage[style= numeric-comp,hyperref=true, doi=false,url=false,
            isbn=false,
            firstinits=true, sorting = none, 
            block=none, backend=bibtex,maxnames=99]{biblatex}
\renewbibmacro{in:}{} 
\bibliography{strucave}

\usepackage{tikz}
\usepackage{tkz-fct}
\usepackage{tkz-euclide}
\usetikzlibrary{external}
\usetikzlibrary{shapes.geometric}
\usetikzlibrary{decorations.shapes,arrows,calc,decorations.markings,shapes,decorations.pathreplacing,shapes.arrows}
\usetikzlibrary{positioning}
\usetikzlibrary{topaths,calc}
\tikzset{main node/.style={circle,fill=blue!20,draw,inner sep=1pt},}
  
  \tikzset{highlight1/.style={rectangle,
                           fill=red!15,
                           rounded corners = 0.5 mm,
                           inner sep=1pt,
                           fit=#1}}

\newcommand\hlight[1]{\tikz[overlay, remember picture,baseline=-\the\dimexpr\fontdimen22\textfont2\relax]\node[rectangle,fill=blue!13,rounded corners,fill opacity = 0.2,draw,thick,text opacity =1] {$#1$};}
  
\usetikzlibrary{fit}

\newtheorem{Theorem}{Theorem}
\newtheorem{Definition}{Definition}
\newtheorem{Example}{Example}
\newtheorem{Proposition}[Theorem]{Proposition}
\newtheorem{Lemma}{Lemma}

\newcommand{\xc}[1]{\vspace{.1cm}
\noindent {\em #1} }

\newcommand{\cT}{\mathcal{T}}

\newcommand{\R}{\mathbb{R}}

\newcommand{\bF}{\mathbb{F}}

\newcommand{\N}{\mathbb{N}}

\newcommand{\C}{\mathrm{C}}
\newcommand{\rd}{\mathrm{d}}
\newcommand{\V}{\mathbb{V}}

\newcommand{\G}{\mathcal{G}}

\newcommand{\dep}{\operatorname{dep}}

\newcommand{\supp}{\operatorname{supp}}
\DeclareMathOperator*{\lcm}{lcm}

\title{Sparse Linear Ensemble Systems and Structural Averaged Controllability: Single-input Case\thanks{Corresponding author: X. Chen.}}
\begin{document}
\author{Xudong Chen\thanks{X. Chen is with the Department of Electrical and Systems Engineering, Washington University in St. Louis. Email: \texttt{cxudong@wustl.edu}.}
\quad and \quad 
Bahman Gharesifard\thanks{B.~Gharesifard is with the Department of Electrical and Computer Engineering, University of California, Los Angeles. Email: \texttt{gharesifard@ucla.edu}.}
}

\date{}
\maketitle

\begin{abstract}
We consider continuum ensembles of linear time-invariant control systems with single inputs. 
A sparsity pattern is said to be structurally averaged controllability if it admits an averaged controllable linear ensemble system. We provide a necessary and sufficient condition for a sparsity pattern to be structurally averaged controllable.  
\end{abstract}

\section{Introduction}
In this paper, we address the problem of structural averaged controllability introduced in~\cite{gharesifard2021structural}. Specifically, we consider continuum ensembles of linear time-invariant control systems with single inputs. The $(A,b)$ pairs of the individual systems are sparse, sharing a common sparsity pattern.   
The sparsity pattern is said to be structurally averaged controllable if it admits sparse linear ensemble system that are averaged controllable. 
Precise definitions will be given shortly in Subsection~\ref{ssec:problemformulation}.  

Ensemble control originated from quantum spin systems~\cite{brockett2000stochastic,li2006control}, and provides an alternative approach for controlling large-scale multi-agent systems, which is by nature  resilient and scalable~\cite{chen2021sparse}.  
A major technical challenge of ensemble control stems from the requirement that the control input be generated irrespective of parameters of the individual systems. Over the last score, there have been steady efforts and progress made in obtaining necessary and/or sufficient conditions for ensemble systems to be controllable or even path-controllable (see, e.g.,~\cite{helmke2014uniform,li2015ensemble,agrachev2016ensemble,chen2019structure,dirr2021uniform}). 
When the individual systems are themselves networked control systems as the scenario considered in this paper, characterizations of network topologies that can sustain ensemble controllability have been obtained in~\cite{chen2021sparse,chen2019controllability}.   

However, the motivation for us to study averaged controllability and the relevant structural properties stems from a recent {\em negative} result:  
It has been shown~\cite{chen2023controllability,danhane2023conditions} that linear ensemble systems are problematic in terms of ensemble controllability when the underlying parameterization spaces are multidimensional. For this type of ``pathological'' case, one has to relax the notion of ensemble controllability and seek for the controllable characteristics of the whole system. 
We formulate these characteristics as integrated outputs, where the integration is taken over the parameterization space. The choice of such formulation is rooted in the study of statistical properties of the ensemble system.

A simple, yet important integrated output is the state-average. For linear ensemble systems, a necessary and sufficient condition for averaged controllability has been obtained in~\cite{zuazua2014averaged}. Building upon this condition, we proceed one step further and address the problem of structural averaged controllability, within the scope of linear ensemble systems.     

In the earlier work~\cite{gharesifard2021structural}, we focused on the class of linear ensemble systems whose parameterization spares are unit closed intervals equipped with the Lebesgue measure.  
We have demonstrated that the condition of being structurally averaged controllable is strictly weaker than the one of being structurally controllable for finite-dimensional linear systems~\cite{lin1974structural}. We have also exhibited a special class of sparsity patterns that are structurally averaged controllable --- these are the ones that have a single control-node and a special state-node which has a self-loop and is the predecessor of all other state-nodes (the correspondence between sparsity patterns and graphs will be given in Subsection~\ref{ssec:problemformulation}).     

In this paper, we consider a much broader class of linear ensemble systems whose parameterization spaces are allowed to be arbitrary continuum spaces with probability measures. We extend significantly the results of~\cite{gharesifard2021structural}. Amongst others, we provide in Theorem~\ref{thm:main} a complete characterization of sparsity patterns, with single control-nodes, that are structurally averaged controllable.   

The remainder of the paper is organized as follows: At the end of this section, we present key notions and notations used throughout the paper. In Section~\ref{sec:mainresult}, we formulate precisely the problem and state the main result. Section~\ref{sec:proof} is dedicated to the analyses and the proof of the main result. This paper ends with conclusions.      

\vspace{.1cm}

\noindent
{\bf Notations.} We gather here key notions and notations. 

\xc{Graphs.} Let $G = (V, E)$ be a directed graph (or simply digraph), possibly with self-loops. The node and edge sets of $G$ are $V$ and $E$, respectively. 
We denote by $v_iv_j$ a directed edge from $v_i$ to $v_j$; we call $v_i$ an {\em in-neighbor} of $v_j$, and $v_j$ an {\em out-neighbor} of $v_i$. For a subgraph $G' = (V',E')$ of $G$, we let
$$
N_{\rm out}(G'):=\{v_j\in V \mid \mbox{there is a } v_i \in V' \mbox{ s.t. } v_iv_j \in E\}.
$$
We define $N_{\rm in}(G')$ in a similar way.

A {\em walk}~$\tau$ from $v_i$ to $v_j$ is a sequence of nodes $\tau = v_{i_1}\ldots v_{i_k}$,
with $v_{i_1} = v_i$ and $v_{i_k} = v_j$, such that each $v_{i_j} v_{i_{j+1}}$, for $j =
1,\ldots ,k-1$, is an edge of $G$. The {\em length} of the walk, denoted by $\ell(\tau)$, is the
number of edges contained in it. A walk is said to be {\em closed} if its starting and ending nodes are the same.  
A walk is a {\em path} if there is no
repetition of nodes in the sequence. A walk is a {\em cycle} if there
is no repetition of nodes except the repetition of starting and
ending nodes. Note that a self-loop at a node $v_i$ is a cycle of
length~1. Given two walks $\tau_1 = v_{i_1}\ldots v_{i_k}$ and $\tau_2 = v_{j_1}\ldots v_{j_\ell}$ such that the ending node $v_{i_k}$ of $\tau_1$ coincides with the starting node $v_{j_1}$ of $\tau_2$, we can obtain a new walk by concatenating $\tau_1$ and $\tau_2$:
$$\tau=\tau_1 \tau_2:= v_{i_1}\ldots v_{i_k} v_{j_2} \cdots v_{j_\ell}.$$
In case $\tau_2$ has only a single node $v_{j_1}$, we have $\tau = \tau_1$.  
It should be clear that $\ell(\tau) = \ell(\tau_1) + \ell(\tau_2)$.

A node $v_j$ is said to be a {\em successor} of $v_i$ if there exists a path in $G$ from $v_i$ to~$v_j$, and $v_i$ is said to be a {\em predecessor} of~$v_j$. 
The digraph $G$ is said to be {\em strongly connected} if for any two distinct nodes $v_p$ and $v_q$, $v_q$ is both a successor and a predecessor of $v_p$. A graph with only a single node is, by default,  strongly connected. Such a graph is said to be {\em trivial} if it has no self-loop.

A node $v_r$ is said to be a {\em root} of $G$ if any other node is a successor of $v_r$.     
The digraph $G$ is said to be a {\em directed tree} if it has a unique root $v_r$ and for any other node $v_i$ of $G$, there exists a unique path from $v_r$ to $v_i$.

Given a subset $V'$ of $V$, the subgraph $G' = (V', E')$ of $G$ is said to be {\em induced by} $V'$ if the edge set $E'$ satisfies the following condition: if two nodes $v_i,v_j\in V'$ are such that $v_iv_j\in E$, then $v_iv_j\in E'$.  

Let $G = (V, E)$ and $H = (W,F)$ be two digraphs. A {\em graph homomorphism} $\pi: G\to H$ is a map from $V$ to $W$ such that if $v_iv_j$ is an edge in $G$, then $\pi(v_i)\pi(v_j)$ is an edge in $H$. 
One can extend the map $\pi$ to the edge sets $\pi:E\to F$, sending $v_iv_j$ to $\pi(v_iv_j) := \pi(v_i)\pi(v_j)$. 
Then, for a subgraph $G' = (V',E')$ of $G$, we let $\pi(G')$ be the subgraph of $H$ with $\pi(V')$ the node set and $\pi(E')$ the edge set. Conversely, for a subgraph $H' = (W', F')$ of $H$, we define $\pi^{-1}(H'):= (\pi^{-1}(W'), \pi^{-1}(F'))$.   

\xc{Miscellanies.} Let $G = (V, E)$ be a graph on $n$ nodes, and $V' = \{v_{i_1},\cdots,v_{i_k}\}$ be a subset of $V$, with $i_1 < \cdots < i_k$. 
For a vector $x\in \R^n$, we let $x |_{V'} \in \R^k$ be the subvector of $x$ given by
$x |_{V'} := (x_{i_1},\ldots, x_{i_k})$. The notation extends to matrices: specifically, for $X\in \R^{n\times m}$ with $x_1,\ldots, x_m\in \R^n$ its columns, we let 
$$  
X |_{V'} := \left [ x_1 |_{V'},\cdots, x_n |_{V'}\right ].
$$

For a matrix $C\in \R^{n\times m}$, we let $\|C\|$ be the induced matrix 2-norm.     
Let $\Sigma$ be an arbitrary topological space. A function $f: \Sigma\to \R^{n\times m}$ is {\em bounded} if exists a number $\gamma>0$ such that $\|f(\sigma)\| < \gamma$ for all $\sigma\in \Sigma$. We denote by $\C^0_b(\Sigma, \R^{n\times m})$ the space of all bounded, continuous functions from $\Sigma$ to $\R^{n\times m}$. For each $f\in \C^0_b(\Sigma, \R^{n\times m})$, we define
$$
\|f\|_\infty := \sup_{\sigma\in\Sigma} \|f(\sigma)\|.
$$
The topology on $\C^0_b(\Sigma, \R^{n\times m})$ induced by the norm is called the {\em uniform topology}. 

In this paper, we will deal with matrices with finitely many row, but infinitely many columns. Given such a matrix 
$$C = [C_1, C_2, C_3, \cdots ], \quad \mbox{with } C_j\in \R^n \mbox{ for all } j \geq 1,$$   
we let $C_{[m]} := [C_1,\ldots, C_m]$ be the finite-dimensional submatrix of $C$ obtained by keeping only the first $m$ columns. We say that $C$ has full rank (i.e., rank~$n$) if there exists a positive integer $m$ such that $C_{[m]}$ has rank~$n$. 

For a positive integer $n$ and for two real numbers $p$ and $q$, we write $p \equiv_n q$ if $n$ is a divisor of $(p-q)$.

\section{Problem Formulation and Main Result}\label{sec:mainresult}
\subsection{Problem formulation}\label{ssec:problemformulation}
Let $\Sigma$ be a manifold, possibly with boundary, and $\mu$ be a Borel probability measure on $\Sigma$ whose support contains an open set. 
We consider a continuum ensemble of linear time-invariant systems driven by a single control input: 
\begin{equation}\label{eq:systemeq}
\frac{\partial}{\partial t} x(t,\sigma) = A(\sigma)x(t,\sigma) + b(\sigma) u(t), \quad \mbox{for }\sigma\in \Sigma,
\end{equation}
where $x(t,\sigma)\in \R^n$ is the state of the individual system indexed by~$\sigma$ at time~$t$, $u(t)\in \R$ is the common control input, and $A:\Sigma \to \R^{n\times n}$ and $b:\Sigma\to \R^n$ are bounded, continuous functions. The control input is said to be {\bf admissible} if for any $T>0$, the function $u:[0,T]\to\R$ is integrable. 

Let $\chi(t):\Sigma \to \R^n$ be the {\bf profile} of system~\eqref{eq:systemeq} at time~$t$, defined by sending a parameter $\sigma$ to the corresponding state $x(t,\sigma)$. It should be clear that if $\chi(0)$ is bounded and continuous, then so is $\chi(t)$ for any admissible control input. Denote by $\bar x(t)$ the average of the individual states at time~$t$: 
$$
\bar \chi(t): = \int_\Sigma \chi(t) \rd \mu = \int_\Sigma x(t,\sigma) \rd \mu.
$$
We have the following definition: 

\begin{Definition}\label{def:averagectrl}
System~\eqref{eq:systemeq} is {\bf averaged controllable} if for any initial profile $\chi(0)\in \mathrm{C}_b^0(\Sigma, \R^n)$, any target average $x^*\in \R^n$, and any time~$T>0$, 
there exists an admissible control input $u(t)$ such that the solution of~\eqref{eq:systemeq} generated by $u(t)$ satisfies $\bar \chi(T) = x^*$.
\end{Definition}

Since the ensemble system~\eqref{eq:systemeq} is determined by the $(A, b)$ pair, we will simply say that $(A, b)$ is averaged controllable if~\eqref{eq:systemeq} is. We have the following  necessary and sufficient condition adapted from~\cite{zuazua2014averaged} for averaged controllability:

\begin{Lemma}\label{lem:necsufforavectrl}
    Ensemble system~\eqref{eq:systemeq} is averaged controllable if and only if the following column-infinite matrix:
    \begin{equation}\label{eq:defcab}
    C(A, b) := \left [ \int_\Sigma b \, \rd\mu, \int_\Sigma A b \, \rd\mu, \int_\Sigma A^2 b \, \rd\mu, \cdots\right ]  
    \end{equation}
    is of full rank, i.e., rank~$n$.
\end{Lemma}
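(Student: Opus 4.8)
The plan is to reduce averaged controllability to a finite-dimensional surjectivity statement and then identify the relevant image with the column span of $C(A,b)$ using analyticity in $t$. First I would solve~\eqref{eq:systemeq} pointwise in $\sigma$ by variation of constants,
$$
x(T,\sigma) = e^{TA(\sigma)}\chi(0)(\sigma) + \int_0^T e^{(T-s)A(\sigma)} b(\sigma)\, u(s)\,\rd s,
$$
and then integrate over $\Sigma$. Since $A$ and $b$ are bounded and continuous and $\mu$ is a finite measure, Fubini's theorem lets me interchange the time and parameter integrals, giving
$$
\bar\chi(T) = \int_\Sigma e^{TA(\sigma)}\chi(0)(\sigma)\,\rd\mu + \int_0^T \beta(T-s)\, u(s)\,\rd s, \qquad \beta(t) := \int_\Sigma e^{tA(\sigma)} b(\sigma)\,\rd\mu .
$$
The first term is a fixed vector in $\R^n$ independent of the control, so by translating the target, averaged controllability is equivalent to surjectivity onto $\R^n$ of the control-to-average map $u\mapsto \int_0^T \beta(T-s)u(s)\,\rd s$; equivalently, the reachable subspace $\cR_T := \Span\{\int_0^T \beta(T-s)u(s)\,\rd s : u \text{ admissible}\}$ equals $\R^n$.

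Next I would identify $\cR_T$ with the column span of $C(A,b)$ by computing its orthogonal complement. If $w\in\R^n$ satisfies $w^\top \cR_T = 0$, then $\int_0^T w^\top\beta(T-s) u(s)\,\rd s = 0$ for every admissible $u$; since $s\mapsto w^\top\beta(T-s)$ is continuous, the fundamental lemma forces $w^\top\beta(\tau) = 0$ for all $\tau\in[0,T]$. The function $\beta$ is real-analytic in $t$ (the series $e^{tA(\sigma)} = \sum_k t^k A(\sigma)^k/k!$ converges uniformly in $\sigma$ because $\|A\|_\infty<\infty$, so it may be integrated and differentiated term by term under the $\mu$-integral), whence all derivatives at $0$ vanish. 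A termwise computation gives
$$
\beta^{(k)}(0) = \int_\Sigma A(\sigma)^k b(\sigma)\,\rd\mu,
$$
which are precisely the columns of $C(A,b)$. Thus $w^\top\beta(\tau)\equiv 0$ on $[0,T]$ is equivalent to $w$ annihilating every column of $C(A,b)$, and conversely such a $w$ makes $w^\top\beta(\tau)\equiv 0$ for all $\tau$ via the power series. Hence $\cR_T^\perp$ equals the orthogonal complement of the column span of $C(A,b)$, so $\cR_T$ is exactly that column span for every $T>0$. Combining with the previous paragraph, the system is averaged controllable if and only if the columns of $C(A,b)$ span $\R^n$, i.e.\ $\rk C(A,b) = n$; this matches the excerpt's truncation definition of full rank, since a spanning set drawn from the columns can be chosen finite and hence lies in some $C(A,b)_{[m]}$.

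The main obstacle is analytic rather than conceptual: rigorously justifying the two interchanges (Fubini for the averaged variation-of-constants formula, and differentiation under the integral sign yielding $\beta^{(k)}(0)=\int_\Sigma A^k b\,\rd\mu$), together with the uniform convergence of the matrix exponential series needed to prove $\beta$ is analytic. All of these follow from $\|A\|_\infty,\|b\|_\infty<\infty$ and finiteness of $\mu$ via dominated convergence, but care is needed to bound the tails $\sum_{k\ge N} t^k\|A\|_\infty^k\|b\|_\infty/k!$ uniformly on compact $t$-intervals. Once these estimates are in place, the reachability-to-span identification is the standard Kalman-type argument, now carried out for the averaged kernel $\beta$.
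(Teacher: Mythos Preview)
The paper does not actually prove this lemma: it is stated as a result ``adapted from~\cite{zuazua2014averaged}'' and used without further argument. Your proposal supplies exactly the standard proof one would expect from that reference---variation of constants, Fubini to form the averaged kernel $\beta(t)=\int_\Sigma e^{tA}b\,\rd\mu$, and then the Kalman-type identification of the reachable subspace with the span of $\{\beta^{(k)}(0)\}_{k\ge 0}$ via analyticity of~$\beta$. The argument is correct and the analytic justifications you flag (Fubini, termwise differentiation/integration of the exponential series) are indeed routine consequences of $\|A\|_\infty,\|b\|_\infty<\infty$ and $\mu(\Sigma)=1$. So there is nothing to compare against in the paper itself; your write-up is a sound proof of a result the authors chose to quote.
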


In this paper, we deal with sparse $(A, b)$ pairs. By convention, the sparsity pattern is represented by a digraph $G = (V, E)$ on $(n + 1)$ nodes as follows: 
\begin{itemize}
\item The node set $V$ is a disjoint union of two subsets: $V_\alpha := \{\alpha_1,\ldots, \alpha_n\}$ and a singleton $V_\beta := \{\beta\}$. The $\alpha$-nodes correspond to the $n$ scalar states $x_i$, and the $\beta$-node corresponds to the single input $u$.  
\item There is a directed edge from $\alpha_j$ to $\alpha_i$ if $a_{ij}\neq 0$. There is a directed edge from $\beta$ to $\alpha_i$ if $b_i\neq 0$.  The $\beta$-node does not have any in-neighbor.    
\end{itemize}
We call $G$, defined above, the digraph {\bf induced by} the pair $(A, b)$.

Let $\mathcal{G}_{n,1}$ be the collection of all weakly connected digraphs $G$ on $(n + 1)$ nodes, with $n$ $\alpha$-nodes and a single $\beta$-node, satisfying the condition that $\beta$ has no in-neighbor. 
Given a digraph $G\in \G_{n,1}$,  a pair $(A,b)\in \C_b^0(\Sigma, \R^{n\times (n+1)})$ is said to be {\bf compliant with} $G$ if the digraph induced by $(A, b)$ is a subgraph of $G$.  
Further, we let
\begin{equation*}
\V(G):= \{(A, b)\in \C_b^0(\Sigma, \R^{n\times (n+1)})  \mid (A, b) \mbox{ is compliant with } G\}.
\end{equation*}
It should be clear that $\V(G)$ is a subspace. 
We now have the following definition:

\begin{Definition}\label{def:strucavectrl}
A digraph $G\in \mathcal{G}_{n,1}$ is {\bf structurally averaged controllable} if there exists a pair $(A, b)\in \V(G)$ such that it is averaged controllable.  
\end{Definition}

\subsection{Main result}\label{ssec:mainresult}
In this subsection, we present a necessary and sufficient condition for $G\in \G_{n,1}$ to be structurally averaged controllable. Our presentation relies on the notion of the so-called strong component decomposition:

\begin{Definition}\label{def:scd}
Let $G = (V, E)$ be an arbitrary weakly connected digraph. The {\bf strong component decomposition (SCD)} of $G$ is a node-set decomposition $V = \cup_{i = 0}^N V_i$, where the $V_i$'s are disjoint, such that the following hold:
\begin{enumerate}
\item For each $i = 0,\ldots, N$, the subgraph $G_i$ induced by $V_i$ is strongly connected. 
\item Let $G'$ be an arbitrary strongly connected subgraph of $G$. Then, $G'$ is a subgraph of $G_i$ for some $i = 0,\ldots, N$. 
\end{enumerate}
We call $G_i$'s the {\bf strong components} of $G$. 
\end{Definition}

Note that a subgraph $G_i$ can be a single node (with or without self-loop). 
The $\beta$-node itself forms a strong component, which we denote by $G_0$. 

Let $G_{\rm cyc} = (V_{\rm cyc}, E_{\rm cyc})$ be the union of the nontrivial strong components (i.e., components that contain cycles).  
Said in another way, a strong component $G_i$ is {\em not} in $G_{\rm cyc}$ if and only if $G_i$ comprises a single node {\em without} a self-loop. 

Let $V^+_{\rm cyc}$ be the set of successors of nodes in $V_{\rm cyc}$, and $G^+_{\rm cyc}$ be the subgraph induced by $V^+_{\rm cyc}$. 
The essential part of $G$ that determines whether $G$ is structurally averaged controllable is introduced in the following definition:

\begin{Definition}\label{def:core}
Given a digraph $G \in \G_{n,1}$, let $V^*:= V - V^+_{\rm cyc}$ and $G^*$ be the subgraph of~$G$ induced by $V^*$. We call $G^*$ the {\bf core} of $G$. 
\end{Definition}

It should be clear from the construction that a node $v_i$ is in $G^*$ if and only if it is {\em not} a successor of any cycle in $G$. In particular, $G^*$ is an acyclic graph with $\beta$ the root. 

We illustrate the above notions of $G$ in Figure~\ref{fig:ex}. 

\begin{figure*}[htb!]
\begin{center}
\begin{tikzpicture}[scale=1.0,node distance=1.4cm]   
\node[main node] (1)  {$\alpha_1$};  
\node[main node] (b) [below of= 1] {$\beta$};
    \node[main node] (2) [left of=1]  {$\alpha_2$};    
    \node[main node] (5) [above 
    left  of=2]  {$\alpha_4$};
    \node[main node] (6) [below left of= 5] {$\alpha_6$};    
    \node[main node] (7) [left of= 6] {$\alpha_8$};
     \node[main node] (3) [right of= 1] {$\alpha_3$};
    \node[main node] (4) [above right of= 3] {$\alpha_5$};    
    \node[main node] (8) [below right of= 4] {$\alpha_7$}; 
    \node[main node] (9) [right of= 8] {$\alpha_9$};     
        
    \path[draw,shorten >=2pt,shorten <=2pt]    
    (1) edge [-latex] (3)    
    (6) edge [-latex] (2)
    (6) edge [-latex] (7)
    (4) edge [-latex] (8)
    (5) edge [-latex] (6)    
    (2) edge [-latex] (5)
    (b) edge [-latex] (1)
    (b) edge [-latex] (2)    
    (3) edge[bend left = 15, -latex] (4)    
    (4) edge[bend left = 15, -latex] (3)  
    (8) edge[loop below,min distance=6mm,-latex] (8)
    (8) edge[-latex] (3)
    (8) edge[-latex] (9);        
    \end{tikzpicture}
\end{center}
\caption{The digraph shown in the figure is weakly connected. The {\em SCD} yields~6 strong components, including~4 trivial ones $G_0 = \{\beta\}$, $G_1 = \{\alpha_1\}$, $G_4 = \{\alpha_8\}$, $G_5 = \{\alpha_9\}$, and 2 nontrivial ones, namely, $G_2$ the subgraph induced by $\{\alpha_2,\alpha_4,\alpha_6\}$ and $G_3$ the subgraph induced by $\{\alpha_3,\alpha_5,\alpha_7\}$.   
The subgraph $G_{\rm cyc}$ consists of $G_2$ and $G_3$. 
The subgraph $G^+_{\rm cyc}$ consists of $G_{\rm cyc}$, the nodes $\alpha_8$ and $ \alpha_9$, and the edges $\alpha_6\alpha_8$ and $\alpha_7\alpha_9$. Finally, the core $G^*$ is the subgraph induced by $ \{\beta,\alpha_1\}$.}\label{fig:ex}
\end{figure*}
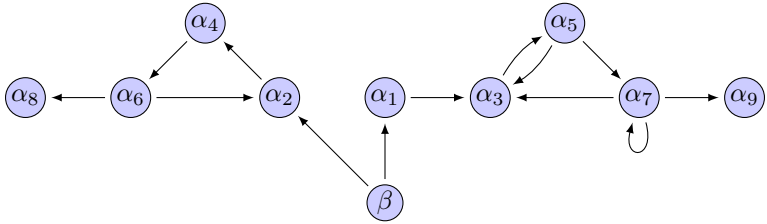

We now state the main result of this paper:
  
\begin{Theorem}\label{thm:main} 
Let $G\in \G_{n,1}$ and $G^*$ be its core. Then, $G$ is structurally averaged controllable if and only if $G^*$ contains a directed spanning  path.
\end{Theorem}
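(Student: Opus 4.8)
The plan is to prove the two implications separately, exploiting throughout a single structural observation: no edge of $G$ enters a core node from outside the core. Indeed, if some $w\in V^+_{\rm cyc}$ had an out-edge to a node $v\in V^*$, then $v$ would be a successor of the same cycle that $w$ succeeds, contradicting $v\in V^*$. Ordering the $\alpha$-nodes so that the $m:=|V^*|-1$ core states come first, this makes every $A$ compliant with $G$ block lower-triangular, with diagonal blocks $A_{**}$ (the core dynamics, which is nilpotent since $G^*$ is acyclic) and $A_{++}$ (the dynamics on $V^+_{\rm cyc}$), and with vanishing upper-right block. Consequently $(A^k b)|_{V^*}=A_{**}^k\,b_*$ for every $k$, where $b_*=b|_{V^*}$; that is, the restriction of $C(A,b)$ to the core rows is exactly $C(A_{**},b_*)$, the averaged-controllability matrix of the core subsystem viewed in isolation. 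I will also assume at the outset that every $\alpha$-node is reachable from $\beta$ (equivalently, $\beta$ is a root of $G$): this is clearly necessary, since an unreachable coordinate of $\int_\Sigma A^k b\,\rd\mu$ vanishes for all $k$, and it is the standing situation in which $\beta$ is the root of $G^*$.

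For necessity, suppose $(A,b)\in\V(G)$ is averaged controllable, so $C(A,b)$ has rank $n$ by Lemma~\ref{lem:necsufforavectrl}. Projecting onto the core rows shows that $C(A_{**},b_*)$ must have rank $m$. Now the $i$-th entry of $\int_\Sigma A_{**}^k b_*\,\rd\mu$ is the integral of a sum of products of edge-weights taken over all walks $\beta\to\alpha_i$ of length $k+1$ inside $G^*$; in particular it is identically zero whenever no such walk exists. Hence the support of $C(A_{**},b_*)$ is contained in the support allowed by the sparsity pattern of $G^*$, so its rank is at most the generic (structural) rank of single-input systems compliant with $G^*$. Since $G^*$ is acyclic with single source $\beta$, such a system is structurally controllable if and only if $G^*$ is spanned by a single directed path (the acyclic specialization of Lin's cactus criterion, $\supp$orted by the fact that an acyclic rooted digraph with two sources admits no Hamiltonian path). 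Therefore, if $G^*$ has no directed spanning path, the structural rank is strictly below $m$, forcing $\mathrm{rank}\,C(A_{**},b_*)<m$ and hence $\mathrm{rank}\,C(A,b)<n$, a contradiction.

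For sufficiency I will construct a single pair $(A,b)\in\V(G)$ with $C(A,b)$ of rank $n$. I order the strong components $G_0=\{\beta\},G_1,\ldots,G_N$ topologically, so that $A$ is block lower-triangular with the strong components as diagonal blocks, and build the moment span block by block along this condensation DAG; block lower-triangularity guarantees that the coordinates of $G_i$ are never corrupted by the weights chosen for later components. On the core I place constant weights realizing the spanning path $\beta\to\alpha_{i_1}\to\cdots\to\alpha_{i_m}$: then $\int_\Sigma A_{**}^k b_*\,\rd\mu$ first reaches $\alpha_{i_k}$ at step $k$, so the core rows of $C(A,b)$ form a triangular, full-rank $m\times m$ block, which is exactly where the spanning-path hypothesis is used and suffices. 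On each nontrivial strong component, together with the tree of trivial components hanging off it, I instead make selected internal edge-weights depend on $\sigma$, so that the cyclic gain is a non-constant function $\lambda(\cdot)$ on $\Sigma$.

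The heart of the argument, and the step I expect to be the main obstacle, is to show that such a parameter-dependent cyclic cell, excited through the single scalar stream arriving from upstream, contributes its full dimension $d_i=|V_i|$ to the averaged moment matrix. Unlike the acyclic core, here walks into the cell occur at infinitely many lengths by looping around the cycle, so the relevant contributions take the form $\int_\Sigma M(\sigma)^j v(\sigma)\,\rd\mu$ with $M$ the cell's internal matrix; choosing $(M(\sigma),v(\sigma))$ controllable for each $\sigma$ reduces the question to the linear independence of the moment vectors $\bigl(\int_\Sigma \lambda(\sigma)^k\,\rd\mu\bigr)_{k\ge 0}$, whose Hankel matrix is a nontrivial analytic (Vandermonde-type) function of the chosen weights. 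The hypothesis that $\supp\mu$ contains an open set is exactly what prevents this determinant from vanishing, since $\lambda$ takes infinitely many values there and no nonzero polynomial relation can hold, yielding rank $d_i$ on the cell's coordinates. This is also precisely where averaging is strictly stronger than ordinary structural controllability: the non-core part (for instance the two parallel cyclic branches of Figure~\ref{fig:ex}) generally fails Lin's single-input cactus test, yet becomes averaged controllable through this parameter variation. Combining the triangular core block with the full-dimension contribution of every cyclic cell, the block lower-triangular assembly gives $\mathrm{rank}\,C(A,b)=n$, completing the proof.
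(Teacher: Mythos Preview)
Your necessity argument reaches the right conclusion by the same block-triangular isolation of the core that the paper uses, but the step ``rank of $C(A_{**},b_*)$ is at most the generic structural rank'' is not a valid general principle: averaging can \emph{raise} the reachable rank beyond that of any constant pair (this is precisely why structural averaged controllability is strictly weaker than ordinary structural controllability). The inequality happens to hold on the acyclic core only because both ranks are bounded by the number of nonzero columns, namely the length of the longest path from $\beta$; the paper argues this column count directly from nilpotency rather than routing through Lin's criterion.

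The sufficiency sketch has a genuine gap. Block lower-triangularity of $A$ along the condensation guarantees that the $G_i$-rows of $A^kb$ are insensitive to weights chosen in \emph{later} components, but it does not let you assemble full rank block by block, because cyclic cells that are simultaneous out-neighbors of the core are \emph{parallel}: they occupy disjoint row blocks yet share every column index $k$. Showing that each cell's rows, taken alone, span its coordinate subspace does not prevent linear dependence across cells---two self-loops fed by $\beta$ with the same loop weight $\lambda(\sigma)$ each yield a rank-one row, yet the two rows of $C(A,b)$ are proportional. Your sketch never specifies how weights across different cells (or on appendix nodes hanging off a cycle) are chosen to avoid such collisions. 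This is exactly the content of the paper's proof: it first passes to a \emph{reduced} subgraph (Proposition~\ref{prop:reducedgraph}) so that walks of each length from $\beta$ are unique; then assigns edge weights $f^{\nu(e)}$ where $\nu$ carries an \emph{irrational} part forcing the walk-exponents to be pairwise distinct across all nodes in a given reachable set (Lemma~\ref{lem:degreesforCAB} and the proof of Lemma~\ref{lem:fullranksubmatrix}, via Stone--Weierstrass); and finally partitions the late columns by residue modulo $L=\lcm_p\{\ell_p\}$ to extract enough independent columns across all cells simultaneously (Proposition~\ref{prop:sufficiency}). Your Hankel intuition is correct for a single cell; the cross-cell separation device is the missing idea.
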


We equip with $\C^0_b(\Sigma, \R^{n\times (n+1)})$ the uniform topology, and $\V(G)$ the subspace topology. For $G\in \G_{n,1}$, let 
$$\V_*(G):= \{(A, b)\in \V(G) \mid (A, b) \mbox{ is averaged controllable} \}.$$
We have the following result:

\begin{Proposition}
    Suppose that $G$ is structurally averaged controllable; then, $\V_*(G)$ is open and dense in $\V(G)$. 
\end{Proposition}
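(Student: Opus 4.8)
The plan is to treat openness and density separately, both via Lemma~\ref{lem:necsufforavectrl}: a pair lies in $\V_*(G)$ precisely when $C_{[m]}(A,b)$ has rank $n$ for some $m$. The technical device underlying everything is continuity of the moment maps $(A,b)\mapsto \int_\Sigma A^k b\,\rd\mu$ on $\V(G)$ in the uniform topology. This holds because products of uniformly bounded continuous functions depend continuously on their factors in $\|\cdot\|_\infty$, while integration against the probability measure $\mu$ is norm-nonincreasing, so $\|\int_\Sigma A^k b\,\rd\mu-\int_\Sigma (A')^k b'\,\rd\mu\|\le \|A^k b-(A')^k b'\|_\infty$. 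Hence $(A,b)\mapsto C_{[m]}(A,b)\in\R^{n\times m}$ is continuous for every $m$.

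For openness, take $(A,b)\in\V_*(G)$ and fix $m$ with $\rk C_{[m]}(A,b)=n$; then some $n\times n$ minor of $C_{[m]}(A,b)$ is nonzero. Since that minor is a continuous function of $C_{[m]}$, and $C_{[m]}(\cdot)$ is continuous on $\V(G)$, the minor stays nonzero on a neighborhood of $(A,b)$ in $\V(G)$, so the whole neighborhood is averaged controllable. (This step uses only $\V_*(G)\neq\emptyset$ and is thus unconditional.)

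For density, I would exploit that structural averaged controllability supplies a witness $(A_0,b_0)\in\V_*(G)$; fix $m$ with $\rk C_{[m]}(A_0,b_0)=n$. Given an arbitrary $(A,b)\in\V(G)$, consider the segment $(A_t,b_t):=(1-t)(A,b)+t(A_0,b_0)$, which remains in the subspace $\V(G)$. Each entry of $A_t^k b_t$, and therefore—by linearity of the integral—each entry of $\int_\Sigma A_t^k b_t\,\rd\mu$, is a polynomial in $t$, so every $n\times n$ minor of $C_{[m]}(A_t,b_t)$ is a polynomial $p(t)$. At $t=1$ some minor is nonzero, so the corresponding $p$ is not identically zero and vanishes at only finitely many $t$; hence $(A_t,b_t)\in\V_*(G)$ for all but finitely many $t\in[0,1]$, in particular for $t$ arbitrarily close to $0$. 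Since $\|(A_t,b_t)-(A,b)\|_\infty=t\,\|(A_0,b_0)-(A,b)\|_\infty\to 0$ as $t\to 0$, we approximate $(A,b)$ arbitrarily well by averaged controllable pairs, giving density.

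The only delicate points I would verify carefully, as opposed to leaving as routine, are the continuity/polynomiality bookkeeping for the moment maps and the implication that one nonzero minor at $t=1$ forces the rank condition off a finite set. The main conceptual obstacle is density: it is essential to translate toward the structurally guaranteed witness $(A_0,b_0)$ along a segment in $\V(G)$, rather than to argue among, say, constant pairs, because $G$ may be structurally averaged controllable while failing to be structurally controllable, so no constant (parameter-independent) pair need be averaged controllable.
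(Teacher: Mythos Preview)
Your proof is correct and follows essentially the same approach as the paper: openness via continuity of $(A,b)\mapsto C_{[m]}(A,b)$ and the open rank condition, and density via the convex segment toward a fixed averaged-controllable witness, using that the relevant minors are polynomials in the interpolation parameter and hence vanish only finitely often. The paper streamlines the density step by reducing to a single determinant (choosing $m=n-1$, or selecting $n$ columns), but this is a cosmetic difference.
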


\begin{proof}
We establish openness and density of $\V_*(G)$ subsequently. 

\xc{Proof that $\V_*(G)$ is open.} Let $(A,b)\in \V_*(G)$. Then, by Lemma~\ref{lem:necsufforavectrl}, the matrix $C(A, b)$ has full rank. Let~$m\geq n-1$ be such that the submatrix $C_{[m]}(A, b)$ has rank~$n$. Now, consider the map $\rho: \V(G)\to \R^{n\times m}$ defined by
\begin{equation}\label{eq:defrho}
\rho(A',b') := C_{[m]}(A',b'). 
\end{equation}
It should be clear that the map $\rho$ is continuous. Since $C_{[m]}(A,b)$ is of full rank, there exists an open neighborhood $\mathcal{V}$ of $C_{[m]}(A, b)$ in $\R^{n\times m}$ such that any matrix in $\mathcal{V}$ has rank~$n$. 
Then, $\mathcal{U}:= \rho^{-1}(\mathcal{V})$ is an open neighborhood of $(A,b)$ in $\V(G)$, contained in $\V_*(G)$.     

\xc{Proof that $\V_*(G)$ is dense.} The arguments below will be similar to those in~\cite{lin1974structural}. Since $G$ is structurally averaged controllable, there exists at least a pair $(A_*, b_*)$ in $\V_*(G)$. We still let $m\geq n-1$ be such that $C_{[m]}(A_*,b_*)$ has rank~$n$. For ease of presentation, we assume that $m = n - 1$ so that $C_{[m]}(A_*,b_*)$ is a square matrix of full rank (otherwise, one can always pick $n$ columns out of $C_{[m]}(A_*,b_*$) to obtain such a matrix).  
Let $\rho$ be defined as in~\eqref{eq:defrho}.  
Now for any $(A, b)\in \V(G)$, we consider a polynomial map $\delta: \R \to \R$ defined as follows:  
$$ \delta(s) := \det(\rho(A'(s),b'(s))), 
$$
where
$$(A'(s), b'(s)) := s(A_*,b_*)+ (1-s)(A, b).$$  
Since $(A'(1),b'(1))= (A^*,b^*)$, we have that $\delta(1)\neq 0$. Thus, $\delta$ is not identically zero, so it has at most $n$ distinct real roots.   
This, in particular, implies that any open neighborhood of~$0$ in $\R$ contains some $s$ such that $\delta(s)\neq 0$.  Since $(A'(0),b'(0)) = (A, b)$, we have that there exist an arbitrarily small $s > 0$ such that $$(A,b) + s(A_*,b_*)\in \V_*(G).$$
This completes the proof.  
\end{proof}

\section{Analyses and Proofs}\label{sec:proof}
This section is dedicated to the proof of Theorem~\ref{thm:main}. In Subsection~\ref{ssec:necessary}, we establish the necessity: We show that if $G$ is structurally averaged controllable, then $G$ satisfies the condition in Theorem~\ref{thm:main} (i.e., the condition that the core of $G$ contains a spanning path). The proof of sufficiency is more involved, and we outline below a sketch of proof. 

\xc{Sketch of proof for sufficiency.} 
The property of being structurally averaged controllable is monotone with respect to edge addition. Specifically, if $G = (V, E)$ is structurally averaged controllable, then so is any graph $G' = (V, E')$ obtained by adding new edges into $G$ (i.e., $E' \supsetneq E$). 
In Subsection~\ref{ssec:reducedgraph}, we introduce a special class of graphs $G$, termed reduced graphs, satisfying the following two properties: (1) $G$ satisfies the condition of Theorem~\ref{thm:main}, and (2) any graph $G'$ satisfies this condition can be translated, via edge removal, to a reduced graph. By monotonicity, to establish sufficiency, it suffices to show that every reduced graph is structurally averaged controllable. 

The reason we choose to work with reduced graph is for ease of analysis and computation: Specifically, for $G$ a reduced graph, we will be able to obtain an explicit  characterization of all walks with $\beta$ the starting node. Such a characterization is instrumental in analyzing and computing the infinite matrices $C(A, b)$, for $(A,b)\in \V(G)$. More specifically, the $ij$th entry of $C(A, b)$ is, in general, given by $\sum_\tau P(\tau)$, where the sum is over all walks $\tau$ of length~$j$ from $\beta$ to $\alpha_i$, and $P(\tau)$ is the product of the entries in $(A, b)$ corresponding to the edges in~$\tau$. If $G$ is reduced, then such a walk $\tau$, if exists, is unique.      

In Subsection~\ref{ssec:walksandreachablesets}, we characterize the walks in reduced graphs and describe reachable sets of nodes in $G$ (i.e., nodes that can be reached from $\beta$ by walks of particular length). Then, in Subsection~\ref{ssec:constructionAb}, we construct a particular $(A,b)$ pair in $\V(G)$ for $G$ reduced. Each nonzero entry of $(A, b)$ will be of the form $f^{\nu(e)}$, where $e$ is the edge of $G$ corresponding to the entry, $f: \Sigma \to \R_{\geq 0}$ is some continuous function, and $\nu: E\to \R_{\geq 0}$ specifies the power of~$f$ for each edge~$e$. 
In this way, the $ij$th entry of $C(A, b)$, if nonzero, can be expressed as 
\begin{equation}\label{eq:expressionofcij}
c_{ij} = f^{\sum_{e\in \tau}\nu(e)},
\end{equation}
where $\tau$ is the unique walk from $\beta$ to $\alpha_i$ of length~$j$. The expression~\eqref{eq:expressionofcij} will be made explicit in Subsection~\ref{ssec:computationcab}.  

Leveraging these computational results, we focus in Subsection~\ref{ssec:submatricescab} on a class of submatrices of $C(A,b)$ and show that they are full rank. Finally, in Subsection~\ref{ssec:cabavecontrol}, we show that the entire matrix $C(A, b)$ has rank~$n$.  \hfill{\qed}

\subsection{Proof of Necessity}\label{ssec:necessary}
In this subsection, we establish the following result: 

\begin{Proposition}
    Let $G\in \G_{n,1}$ and $G^*$ be its core. If $G^*$ does not have a directed spanning  path, then $G$ is not structurally averaged controllable.    
\end{Proposition}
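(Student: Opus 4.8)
The plan is to prove the statement directly against the rank criterion: using Lemma~\ref{lem:necsufforavectrl}, it suffices to show that if the core $G^*$ has no directed spanning path, then $C(A,b)$ has rank at most $n-1$ for \emph{every} $(A,b)\in\V(G)$, so that no compliant pair is averaged controllable. The whole argument reduces to one structural fact about $G^*$ together with a dimension count on the columns of $C(A,b)$.

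First I would show that the node set $V^*$ of the core is closed under taking in-neighbors: if $v_iv_j\in E$ and $v_j\in V^*$, then $v_i\in V^*$. Indeed, if $v_i$ were a successor of a cycle, then $v_j$ would be one as well, contradicting $v_j\in V^*$. It follows by a downward induction along its edges that every walk from $\beta$ ending at a core node lies entirely in $G^*$; since $G^*$ is acyclic, such a walk repeats no node and is therefore a path. Writing $n^*:=|V^*|-1$ for the number of core $\alpha$-nodes and $D$ for the length of the longest path issued from $\beta$ inside $G^*$, this means there is no walk of length greater than $D$ from $\beta$ to any core node.

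Next I would translate the hypothesis into a numerical bound. Because $\beta$ has no in-neighbor it is the start of any path through all of $G^*$, so a path from $\beta$ of length $n^*$ is exactly a directed spanning path; the absence of one forces $D\le n^*-1$. Recall from the sketch that the $(i,j)$ entry of $C(A,b)$ equals $\sum_\tau\int_\Sigma P(\tau)\,\rd\mu$, summed over the walks $\tau$ of length $j$ from $\beta$ to $\alpha_i$. By the previous paragraph, for every column index $j>D$ this entry vanishes whenever $\alpha_i$ is a core node. Hence all columns of $C(A,b)$ with index $j>D$ lie in the coordinate subspace spanned by the $n-n^*$ non-core $\alpha$-coordinates, while the first $D$ columns span a space of dimension at most $D$. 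Consequently the column space of $C(A,b)$ has dimension at most $D+(n-n^*)\le(n^*-1)+(n-n^*)=n-1$.

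The step I expect to carry the argument is the structural observation that $V^*$ is closed under predecessors: this is precisely what confines walks terminating in the core to the acyclic part $G^*$ and thereby caps their length, and it is what makes the ensuing dimension bound hold uniformly over all $(A,b)\in\V(G)$. I would finally reconcile the bound with the paper's definition of rank for column-infinite matrices: since every column of $C(A,b)$ lies in one fixed subspace of dimension at most $n-1$, every finite truncation $C_{[m]}(A,b)$ has rank at most $n-1$, so $C(A,b)$ is not of full rank and, by Lemma~\ref{lem:necsufforavectrl}, $(A,b)$ is not averaged controllable.
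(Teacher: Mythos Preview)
Your proof is correct and follows essentially the same approach as the paper: both hinge on the observation that $V^*$ is closed under in-neighbors (equivalently, the block $A_{12}=0$ in the paper's decomposition), so that walks from $\beta$ to core nodes are paths in the acyclic $G^*$ and therefore have length bounded by $n^*-1$. The only cosmetic difference is that the paper first decouples the core subsystem $(A_{11},b_1)$ and shows its $C$-matrix has at most $n^*-1$ nonzero columns, whereas you carry out an equivalent dimension count directly on the full matrix $C(A,b)$.
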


\begin{proof}
Let $n^*:= |V^*|$ be the order of $G^*$. By relabelling (if necessary) the nodes of $G$, we can assume that the first $n^*$ nodes $\alpha_1,\ldots, \alpha_{n^*}$ are in $G^*$ and the remaining nodes are in $G^+_{\rm cyc}$. It follows that for any pair $(A, b)$ in $\V(G)$, the matrix $A$ can be written as follows:
$$
A = 
\begin{bmatrix}
A_{11} & 0 \\
A_{21} & A_{22} 
\end{bmatrix},
$$
where $A_{11}$ is $n^*$-by-$n^*$. This holds because if the block $A_{12}$ is nonzero, then there exists at least one node in $G^*$ such that it is a successor of some node in $G^+_{\rm cyc}$, which is a contradiction. Partitioning $x = (x_1; x_2)$ and $b = (b_1;b_2)$ correspondingly, we obtain 
$$
\begin{bmatrix}
\dot x_1(t,\sigma) \\
\dot x_2(t,\sigma)
\end{bmatrix} = 
\begin{bmatrix}
A_{11}(\sigma) & 0 \\
A_{21}(\sigma) & A_{22}(\sigma) 
\end{bmatrix}
\begin{bmatrix}
 x_1(t,\sigma) \\
 x_2(t,\sigma)
\end{bmatrix} + 
\begin{bmatrix}
b_1(\sigma) \\
b_2(\sigma)
\end{bmatrix} u(t).
$$
Note, in particular, that the dynamics of $x_1(t,\sigma)$ do not depend on $x_2(t,\sigma)$. It follows that if the pair $(A, b)$ is averaged controllable, then so is $(A_{11}, b_1)$. 
Consequently, if $G$ is structurally averaged controllable, then so is $G^*$. 

Thus, to establish the proposition, it suffices to show that if $G^*$ does not contain a directed spanning   path, then $G^*$ is not structurally averaged controllable. 
More specifically, we show below that under such a hypothesis of $G^*$, any pair $(A_{11}, b_1)\in \V(G^*)$ is not averaged controllable.

To the end, for a node $\alpha_i$ in $G^*$, let $\dep(\alpha_i)$ be the length of the {\em longest path} in $G^*$ from $\beta$ to $\alpha_i$. The depth of $\beta$ is set to be~$0$. 
Since $G^*$ does not contain a directed spanning path, 
\begin{equation}\label{eq:depthinGstar}
\dep(\alpha_i) < n^*, \quad  \mbox{for all } \alpha_i \in V^*.
\end{equation} 
Now, consider the entries of~$A^k_{11} b_1$. The $i$th entry of $A^k_{11}b_1$ is nonzero {\em only if} there exists a walk in $G^*$ of length~$(k+1)$ from $\beta$ to $\alpha_i$.   
Because $G^*$ is acyclic, all walks in $G^*$ are paths. It then follows from~\eqref{eq:depthinGstar} that 
$$A^{k}_{11} b_1 = 0, \quad \mbox{for all } k \geq n^*-1,$$
which implies that the matrix $C(A_{11},b_1)$ has at most $(n^* - 1)$ nonzero columns (i.e., the first $(n^*-1)$ columns) and, hence, its rank is less than $n^*$. By Lemma~\ref{lem:necsufforavectrl}, $(A_{11},b_1)$ is not averaged controllable. 
\end{proof}

\subsection{Reduced graphs}\label{ssec:reducedgraph}
In this subsection, we introduce a special class of digraphs, termed {\em reduced graphs} (Definition~\ref{def:reduced}), owning the property that any digraph satisfying the condition in Theorem~\ref{thm:main} can be reduced, via edge removal, to a reduced one.   

We start by introducing the notion of a skeleton graph $S$ associated with an arbitrary digraph $G$, which is  obtained by condensing the strong components of $G$ into single nodes. Precisely, we have 

\begin{Definition}\label{def:skeletongraph}
Let $G = (V, E)$ be weakly connected and $G_i$, for $i = 0,\ldots, N$, be the strong components of $G$. The {\bf skeleton graph} of $G$, denoted by $S = (W, F)$, is a directed graph on $(N+1)$ nodes $w_0,\ldots, w_N$ whose edge set $F$ is determined by the following rule: There exists a directed edge $w_i w_j$ if there exists a directed edge in $G$ from a node of $G_i$ to a node of $G_j$.   
\end{Definition}

Note that $S$ may have self-loops: A node $w_i$ has a self-loop if and only if $G_i$ has a cycle. 
By the second item of Definition~\ref{def:scd}, the skeleton graph $S$ will be acyclic if all its self-loops are removed.

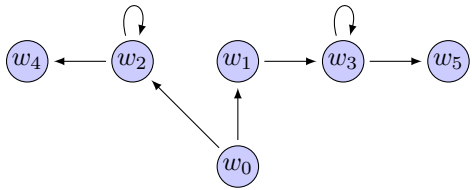
\begin{figure}[htb!]
\begin{center}
\begin{tikzpicture}[scale=1.0,node distance=1.4cm]
    \node[main node] (2) [below right of=1]  {$w_2$};
    \node[main node] (3) [left of= 2] {$w_4$};     
    \node[main node] (6) [right of=2]  {$w_1$};   
    \node[main node] (4) [below of= 6] {$w_0$};
    \node[main node] (7) [right of= 6] {$w_3$};       
    \node[main node] (9) [right of= 7] {$w_5$};    
         
    \path[draw,shorten >=2pt,shorten <=2pt]    
    (2) edge[loop above,min distance=6mm,-latex] (2)   
    (2) edge[-latex] (3)
    (4) edge[-latex] (6)
    (4) edge[-latex] (2)
    (6) edge[-latex] (7)
    (7) edge[-latex] (9)
    (7) edge[loop above,min distance=6mm,-latex] (7);       
    \end{tikzpicture}
\end{center}
\caption{The skeleton graph $S$ of the digraph shown in Figure~\ref{fig:ex}. Node $w_0$ corresponds to the $\beta$ node, $w_2$ and $w_3$ correspond to the two {\em nontrivial} strong components while the others correspond to the trivial ones.}  \label{fig:ex-sk}
\end{figure}
Next, we introduce the map $\pi: G \to S$, defined by sending a node $v_{i'}\in V_i$ to the node~$w_i$. It should be clear from Definition~\ref{def:skeletongraph} that the map $\pi$ is a graph homomorphism.

Let $G^*$ be the core of $G$ and $S^*:= \pi(G^*)$. It is not hard to see from the Definitions~\ref{def:core} and~\ref{def:skeletongraph} that $S^*$ is the core of $S$ and that $G^*$ and $S^*$ are isomorphic under $\pi |_{G^*}$. 
We now have the following definition:

\begin{Definition}\label{def:reduced}
A digraph $G \in \G_{n,1}$ is {\bf reduced} if it satisfies the following conditions: 
\begin{enumerate}
\item Let $S$ be the skeleton graph of $G$ and $S^*$ be the core of $S$. Then, $S$ is a directed tree and $S^*$ is a directed path. 
\item For each edge $w_iw_j$ of $S$, with $w_i \neq w_j$, the set $\pi^{-1}(w_iw_j)$ is a singleton.  
\item If $w_p\in S$ has a self-loop, then it is an out-neighbor of $S^*$. Moreover, for any such node $w_p$, $G_p = \pi^{-1}(w_p)$ is a cycle.  
\end{enumerate}
\end{Definition}

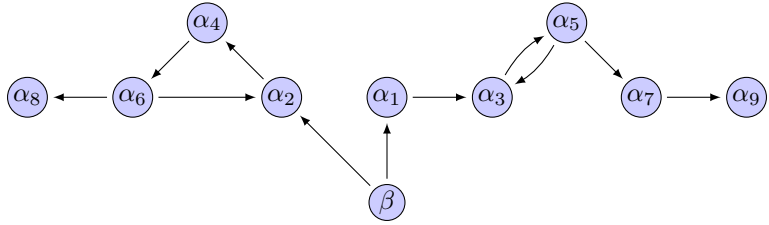
\begin{figure*}[htb!]
\begin{center}
\begin{tikzpicture}[scale=1.0,node distance=1.4cm]   
\node[main node] (1)  {$\alpha_1$};  
\node[main node] (b) [below of= 1] {$\beta$};
    \node[main node] (2) [left of=1]  {$\alpha_2$};    
    \node[main node] (5) [above 
    left  of=2]  {$\alpha_4$};
    \node[main node] (6) [below left of= 5] {$\alpha_6$};    
    \node[main node] (7) [left of= 6]  {$\alpha_8$};

    \node[main node] (3) [right of= 1] {$\alpha_3$};
    \node[main node] (4) [above right of= 3] {$\alpha_5$};    
    \node[main node] (8) [below right of= 4] {$\alpha_7$}; 
    \node[main node] (9) [right of= 8] {$\alpha_9$}; 
    \path[draw,shorten >=2pt,shorten <=2pt]    
    (1) edge[-latex] (3)    
    (6) edge[-latex]  (2)
    (6) edge[-latex]  (7)
    (4) edge[-latex]  (8)
    (5) edge[-latex] (6)    
    (2) edge[-latex] (5)
    (8) edge[-latex] (9)
    (b) edge[-latex] (1)
    (b) edge[-latex] (2)    
    (3) edge[bend left=15,-latex] (4)    
    (4) edge[bend left=15,-latex] (3);     
    \end{tikzpicture}
\end{center}
\caption{A reduced graph obtained by removing edges out of the digraph shown in Figure~\ref{fig:ex}.} \label{fig:ex-reduced}
\end{figure*}

For example, a reduced graph of the digraph shown in Figure~\ref{fig:ex} can be obtained by removing the edge $\alpha_7\alpha_3$ and the self-loop on $\alpha_7$, as shown in Figure~\ref{fig:ex-reduced}. Note that there are multiple ways to obtained a reduced graph. To wit, one can obtain another reduced graph by removing the edge $\alpha_5\alpha_3$ and the self-loop on $\alpha_7$ (while keeping the edge $\alpha_7\alpha_3$).

It should be clear that any reduced graph $G$ satisfies the condition in Theorem~\ref{thm:main}. We establish the following result:

\begin{Proposition}\label{prop:reducedgraph}
Let $G = (V, E)\in \G_{n,1}$ be such that its core $G^*$ contains a spanning path. Then, there is a subgraph $G' = (V, E')$ of $G$, with the same node set~$V$, such that $G'$ is reduced.   
\end{Proposition}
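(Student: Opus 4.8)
The plan is to exhibit a subset $E'\subseteq E$ directly and then check the three requirements of Definition~\ref{def:reduced} one by one. The graph $G'=(V,E')$ will be assembled from three kinds of edges: a \emph{spine}, namely the edges of a spanning path of the core $G^*$; one retained \emph{cycle} inside each of a carefully chosen family of nontrivial strong components; and a family of \emph{tree edges} attaching every remaining node with a single incoming edge. The invariant steering every choice is that the core of $G'$ must stay equal to $V^*$: since (as noted in the excerpt) $\pi|_{G^*}$ is an isomorphism onto the core of the skeleton, keeping the core equal to the path $P^*$ is exactly what makes the skeleton core a directed path, as required by condition~(1).

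Before constructing $E'$ I would record that every node of $G$ is reachable from $\beta$; this is in fact forced by the conclusion, since in any reduced $G'$ the skeleton is a directed tree, and as $\beta$ (hence $w_0$) has no in-neighbour it must be the root of that tree, so every node is a successor of $w_0$ and thus already reachable from $\beta$ in $G'\subseteq G$. The core nodes are reachable through $P^*$ (which starts at $\beta$, because $\beta$ has no in-neighbour), and since every other node is a successor of a cycle, it remains to reach each nontrivial strong component from $\beta$. I flag this as the delicate point: it does not follow from weak connectivity alone and must be drawn from the spanning-path hypothesis (or from an underlying accessibility assumption); I return to it below.

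Granting reachability, I would build $E'$ in three stages. (i) Keep the $n^*-1$ edges of a fixed spanning path $P^*=\beta\,\alpha_{i_1}\cdots\alpha_{i_{n^*-1}}$ of $G^*$ and delete every other edge with both endpoints in $V^*$; because $G^*$ is acyclic, none of these lies on a cycle, so their removal leaves the core intact and reduces $G^*$ to the path $P^*$. (ii) Call a nontrivial strong component $G_j$ \emph{primary} if it has an in-neighbour in $V^*$. For each primary $G_j$ pick a core node $c$ with an edge into $V_j$ landing at some $g\in V_j$; as $G_j$ is strongly connected, fix a single cycle $C_j\subseteq G_j$ through $g$, and keep the edges of $C_j$ together with the one attaching edge from $c$ to $g$. (iii) Every node not yet placed --- the nodes of $V_j\setminus C_j$ for primary $G_j$, every node of a non-primary nontrivial component, and every trivial non-core node --- is a successor of some primary cycle, hence reachable in $G$ from the part already built; processing such nodes in order of $G$-distance from the built part, assign to each a single parent edge coming from an already-placed node. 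Finally delete all edges not named above.

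It then remains to verify Definition~\ref{def:reduced} for $G'$, with skeleton $S'$. The only cycles of $G'$ are the retained $C_j$, so each self-loop node $w_p$ of $S'$ has strong component $\pi^{-1}(w_p)=C_j$, a cycle; this is the second half of condition~(3). The coverage claim used in~(iii) --- that every non-spine node is a successor of some $C_j$ --- follows by tracing a $\beta$-path to the node and taking the first nontrivial component it meets, which is necessarily primary; coverage shows that the core of $G'$ is exactly the spine $V^*$, giving both that the skeleton core is a directed path (condition~(1), second clause) and that each $C_j$ hangs off $S^*$ through the edge from $c$ (condition~(3), first clause). Lastly, each non-root node of $S'$ received exactly one incoming inter-component edge and the only internal cycles sit inside single skeleton nodes, so $S'$ is a directed tree (condition~(1), first clause) with singleton preimages on all its non-loop edges (condition~(2)). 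The main obstacle is precisely the tension handled by the notion of a primary component: deleting edges can only enlarge the core (cutting the cycle that feeds a node turns that node into a core node), so one must retain a cycle for every non-core node, yet retain few enough cycles that the condensation is a tree whose loops sit directly below the core; reconciling these two demands, and securing reachability from $\beta$, are the steps I expect to require the most care.
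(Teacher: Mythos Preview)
Your overall strategy is the same as the paper's: keep a spanning path of $G^*$ as the spine, retain one cycle in each nontrivial strong component directly fed by the core (your ``primary'' components play the role of the paper's Case~2 components), and hang all remaining nodes off this structure by single parent edges. The paper organises this through the skeleton, first choosing a spanning tree $S^{(0)}$ of $S$ and then replacing each nontrivial component by a tree or by a tree-plus-one-cycle; your direct construction in $G$ is equivalent in spirit.

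The genuine gap is in step~(iii). As written, the parent of a node processed there may lie on the spine, and this can enlarge the core of $G'$ beyond $V^*$, breaking condition~(1). For a concrete failure, take edges $\beta\alpha_1$, $\alpha_1\alpha_2$, a self-loop at $\alpha_2$, $\beta\alpha_3$, and $\alpha_2\alpha_3$. Then $V^*=\{\beta,\alpha_1\}$, the unique primary component is $\{\alpha_2\}$, and $\alpha_3$ is handled in~(iii); your rule permits the parent edge $\beta\alpha_3$, after which $\alpha_3$ is no longer a successor of any cycle in $G'$, so the core of $G'$ becomes $\{\beta,\alpha_1,\alpha_3\}$, whose induced subgraph has the two edges $\beta\alpha_1$ and $\beta\alpha_3$ and is not a path. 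Your sentence ``coverage shows that the core of $G'$ is exactly the spine $V^*$'' conflates being a successor of some $C_j$ in $G$ with being one in $G'$. The repair is what your final paragraph hints at but does not enforce: in step~(iii) the parent must be chosen from the already-placed \emph{non-core} part. This is always possible because no edge of $G$ runs from $V^+_{\rm cyc}$ back into $V^*$, so the $G$-path from a primary cycle to any node of $V^+_{\rm cyc}$ stays entirely within $V^+_{\rm cyc}$, furnishing a non-spine parent. With that amendment the verification goes through as you outline; the paper enforces the analogous constraint by insisting that the core of the chosen $S^{(0)}$ be~$P$.
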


\begin{proof}
Let $S$ be the skeleton graph of $G$ and $\pi: G\to S$ be the graph homomorphism. 
Let $w_0:= \pi(\beta)$ and $S^*$ be the core of $S$. 
Since $G^*$ and $S^*$ are isomorphic, $S^*$ contains a spanning path, denoted by $P$.

Let $S^{(0)}$ be a directed spanning tree of $S$ such that it contains $P$ and all the self-loops. It should be clear that $P$ is the core of $S^{(0)}$. We will now remove edges out of $G$ leading to a subgraph $G^{(0)}$ whose skeleton graph is $S^{(0)}$.     
Consider the edges $w_iw_j$ in $S$, with $w_i \neq w_j$. There are two cases: ({\em i}) If $w_iw_j$ is {\em not} in $S^{(0)}$, then we remove all the edges in $\pi^{-1}(w_iw_j)$. ({\em ii}) If $w_iw_j$ is in $S^{(0)}$, then we remove all but one of the edges in $\pi^{-1}(w_iw_j)$.  
It should be clear that the resulting graph $G^{(0)}$ has $S^{(0)}$ as its skeleton graph and, moreover,  satisfies the first two items of Definition~\ref{def:reduced}. 

Next, we consider the nodes in $S^{(0)}$ with self-loops. For convenience, we label these nodes as $w_1,\ldots, w_q$, and let $G_1,\ldots, G_q$ be the corresponding strong components.  
We will now construct a reduced graph $G'$ by removing from each $G_p$, for $p = 1,\ldots, q$, some selected edges, which will be specified below.  
We will use $G^{(p)}$ to denote the graph obtained by removing the selected edges in $\cup_{i = 1}^p G_i$, $S^{(p)}$ the skeleton graph of $G^{(p)}$, and $\pi^{(p)}: G^{(p)}\to S^{(p)}$ the graph isomorphism.    
Along the edge-removal procedure, we will show by induction 
that $G^{(p)}$ satisfies the first two items of Definition~\ref{def:reduced} and, moreover, the core of $S^{(p)}$ is (isomorphic with) the path~$P$. 

The base case $p = 0$ has been established above. For the inductive step, we assume that $G^{(p-1)}$ has been obtained, with $1 \leq p \leq q$, and will construct $G^{(p)}$ by removing edges from $G_p$. Note that $G_p$ is a strong component of $G^{(p-1)}$. We will still use $w_p$ to denote the node $\pi^{(p-1)}(G_p)$ in $S^{(p-1)}$. 

By the induction hypothesis, $S^{(p-1)}$ is a directed tree and since $w_p$ cannot be the root, there is a unique node $w_i$ in $S^{(p-1)}$, with $w_i\neq w_p$, such that $w_i$ is an in-neighbor of $w_p$.  
Furthermore, there is a unique edge in $G^{(p-1)}$, denoted by $v_i\alpha_{p_0}$,  such that $\pi(v_i\alpha_{p_0}) = w_i w_p$. 
In case $w_p$ has out-neighbors other than itself, we denote them by $w_{j_1},\ldots, w_{j_m}$. Similarly, for each $\ell =1, \ldots, m$, there is a unique edge $\alpha_{p_\ell} \alpha_{j_\ell}$ such that $\pi(\alpha_{p_\ell}\alpha_{j_\ell}) = w_pw_{j_\ell}$. It should be clear that the nodes $\alpha_{p_\ell}$, for $0\leq \ell \leq m$, are in $G_p$.   
To this end, we consider two cases:   

{\it Case 1: $w_p\notin N_{\rm out}(P)$.} Since $G_p = (V_p, E_p)$ is strongly connected, there is a directed spanning tree $T_p = (V_p, E'_p)$, {\em without self-loop}, of $G_p$ rooted at the node~$\alpha_{p_0}$. 
We remove all the edges in $E_p - E'_p$ and obtain $G^{(p)}$. In other word, we replace  $G_p$ with $T_p$. 
It follows that the skeleton graph $S^{(p)}$ can be obtained from $S^{(p-1)}$ by replacing the node $w_p$ with $T_p$, the edge $w_iw_p$ with $w_i \alpha_{p_0}$, and the edges $w_pw_{j_\ell}$ with $\alpha_{p_\ell}w_{j_\ell}$ for $\ell = 1,\ldots, m$. It should be clear that $G^{(p)}$ satisfies the first two items of Definition~\ref{def:reduced}. We claim that the core of $S^{(p)}$ is still~$P$. To establish the claim, it suffices to show that the nodes of $T_p$ are successors of some node with a self-loop in $S^{(p)}$. We exhibit below such a node.  
Consider the (unique) path in $S^{(p-1)}$ from $\beta$ to $w_p$. Traversing the path, we let $w_{p'}$ be the first node such that $w_{p'}\notin P$. Then, $w_{p'}$ has a self-loop because otherwise $w_{p'}$ would belong to the core of $S^{(p-1)}$, contradicting the induction hypothesis that $P$ is the core.  
Since $w_p$ is not an out-neighbor of $P$, $w_{p'} \neq w_p$ and $w_{p'}$ belongs to $S^{(p)}$. It follows that all nodes of $T_p$ are successors of $w_{p'}$.

{\it Case 2: $w_p\in N_{\rm out}(P)$.} 
We again let $T_p = (V_p, E'_p)$ be a directed spanning tree of $G_p$, without self-loop and rooted at $\alpha_{p_0}$. 
Let $\alpha_{p'}$ be an in-neighbor of $\alpha_{p_0}$ in $G_p$; such a node exists since $G_p$ is  strongly connected and nontrivial. Let $G'_p:= T_p \cup \{\alpha_{p'}\alpha_{p_0}\}$. Then, $G'_p$ has a unique cycle $C'_p$ given by concatenating the path from $\alpha_{p_0}$ to $\alpha_{p'}$ in $T_p$ with the edge $\alpha_{p'}\alpha_{p_0}$. Let $E''_p$ be the edge set of $G'_p$. 
We obtain $G^{(p)}$ by removing the edges in $E_p - E''_p$ (i.e., by replacing $G_p$ with $G'_p$). The skeleton graph $S^{(p)}$ of $G^{(p)}$ can be described as follows: Let $S'_p$ be the skeleton graph $G'_p$ and $\pi'_p: G'_p \to S'_p$ be the graph homomorphism. Then, $S'_p$ is a directed tree and $w'_0:= \pi'_p(C'_p)$ is the root {\em with a self-loop}. Let $w'_\ell:= \pi'_p(\alpha_{p_{\ell}})$, for $\ell = 1,\ldots,m$ (these $(\ell + 1)$ nodes $w'_0,\ldots, w'_\ell$ may not be distinct). Then, $S^{(p)}$ can be obtained from $S^{(p-1)}$ by replacing the node $w_p$ with $S'_p$, the edge $w_iw_p$ with $w_i w'_{0}$, and the edges $w_pw_{j_\ell}$ with $w'_{\ell}w_{j_\ell}$ for $\ell = 1,\ldots, m$. It should be clear that $G^{(p)}$ satisfies the first two items of Definition~\ref{def:reduced}. Since the root $w'_0$ of $S'_p$ has a self-loop, every node of $S'_p$ is a successor of $w'_0$. Also, any successor $w_{p'}$ of $w_p$ in $S^{(p-1)}$, for $w'_p\neq w_p$, is now a successor of $w'_0$. Thus, $P$ remains as the core of $S^{(p)}$. Furthermore, since $w_p$ is an out-neighbor of $P$ in $S^{(p-1)}$ and since $S^{(p-1)}$ is a directed tree, $w_i$ must be a node of $P$. Since $w_iw'_0$ is an edge of $S^{(p)}$, $w'_0$ is an out-neighbor of~$P$. 

Finally, let $G':= G^{(q)}$, $S':= S^{(q)}$, and $\pi':= \pi^{(q)}$. By construction, $G'$ satisfies the third item of Definition~\ref{def:reduced}. Indeed, if a node $w'_p\in S'$ has a self-loop, then it comes from {\em Case 2} in the above edge-removal procedure, thus being an out-neighbor of $P$. The corresponding strong component $\pi'^{-1}(w'_p)$ is a cycle~$C'_p$.    
\end{proof}

\subsection{Walks and reachable node sets}\label{ssec:walksandreachablesets}
For the remainder of the paper, we will assume that $G\in \G_{n,1}$ is reduced. Each strong component of $G$ is either a cycle or a single node without self-loop. In this subsection, we will first characterize all the walks $\tau$ in $G$ with $\beta$ the starting node, and then describe sets of reachable nodes by walks of particular lengths.

\subsubsection{Characterization of walks in $G$}
Let $G^*$ be the core of $G$, which is a path without self-loops. It should be clear that if $\tau$ is a walk from $\beta$ to some $\alpha_i\in G^*$, then $\tau$ has to be a path (which is unique). We will now deal with the case where $\alpha_i\notin G^*$.   

Recall that $G_{\rm cyc}$ is the union of all {\em nontrivial} strong components of $G$. Since $G$ is reduced, $G_{\rm cyc}$ is a collection of disjoint cycles. We label these cycles as $G_p = (V_p, E_p)$, for $p = 1,\ldots, q$. 
Let $V_p^+$ be the successors of $V_p$ in $G$, and $G_p^+$ be the subgraph induced by $V_p^+$. 
Since $G$ is reduced, it follows from item~2 of Definition~\ref{def:reduced} that for each $G_p$, there is a unique node $\alpha_{p_0}$ in $G_p$ such that it is an out-neighbor of $G^*$:  
\begin{equation}\label{eq:definitialnode}
\{\alpha_{p_0}\} = N_{\rm out}(G^*) \cap G_p.
\end{equation}
For instance, in Figure~\ref{fig:ex-reduced}, $\alpha_2$ and $\alpha_3$ are the unique nodes connecting $G^*$ to the 3-cycle $\alpha_2\alpha_4\alpha_6\alpha_2$ and the 2-cycle $\alpha_3\alpha_5\alpha_3$, respectively. 

We make a few simple observations: 
First, note that there is a unique path in $G$, denoted by $\tau_{\beta\alpha_{p_0}}$, from $\beta$ to $\alpha_{p_0}$. All nodes of $\tau_{\beta\alpha_{p_0}}$, except $\alpha_{p_0}$, belong to $G^*$. Next, we express the cycle $G_p$ as  
$G_p = \alpha_{p_0}\alpha_{p_1}\cdots \alpha_{p_{\ell -1}} \alpha_{p_0}$. 
If we remove the edge $\alpha_{p_{\ell - 1}}\alpha_{p_0}$, then $G_p^+$ becomes a directed tree with $\alpha_{p_0}$ the root. This, in particular, implies that for any node $\alpha_i\in G_p^+$, there is a unique path, denoted by $\tau_{\alpha_{p_0}\alpha_i}$, from $\alpha_{p_0}$ to $\alpha_i$.  

For convenience, we denote by $G^m_p$ the closed walk in $G$ obtained by traversing $m$ times the cycle $G_p$, with $\alpha_{p_0}$ the starting (and ending) node. We allow $m$ to be $0$ and set $G_p^0 := \alpha_{p_0}$. 

The following result is an immediate consequence of the above arguments: 

\begin{Lemma}\label{lem:walks}
Let $\alpha_i\in G_p^+$ and $\tau$ be a walk in $G$ from $\beta$ to $\alpha_i$. Then, $\tau$ can be expressed as a concatenation of three walks: 
\begin{equation*}\label{eq:walkcase2}
\tau = \tau_{\beta \alpha_{p_0}} \, G_p^m \, \tau_{\alpha_{p_0}\alpha_i}.
\end{equation*}
\end{Lemma}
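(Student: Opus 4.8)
The plan is to prove Lemma~\ref{lem:walks} by carefully tracking the structure that reducedness imposes on the graph. Since $\alpha_i \in G_p^+$, the node $\alpha_i$ is a successor of the cycle $G_p$, and by the definition of the core $G^*$ (Definition~\ref{def:core}), $\alpha_i \notin G^*$. The key structural fact I would exploit is that, because $G$ is reduced, the only node of $G_p$ that is an out-neighbor of $G^*$ is $\alpha_{p_0}$, as recorded in~\eqref{eq:definitialnode}. This forces any walk originating at $\beta$ and reaching $\alpha_i$ to pass through $\alpha_{p_0}$, since the only way to enter the strong component $G_p$ (and hence the subtree $G_p^+$ hanging off it) from outside is via the unique edge feeding into $\alpha_{p_0}$.

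First I would argue that $\tau$ must decompose at the two ``gateway'' nodes. Let $\alpha_{p_0}$ be the first node of $G_p^+$ that $\tau$ visits; I would show this first visit is necessarily $\alpha_{p_0}$ itself. Indeed, every node of $G_p^+$ other than those in $G_p$ lies strictly downstream of the cycle (in the tree $G_p^+$ rooted at $\alpha_{p_0}$ after deleting the back-edge), and every node of $G_p$ can only be reached from outside $G_p$ through $\alpha_{p_0}$ by~\eqref{eq:definitialnode} together with the reducedness of $G$. Hence the initial segment of $\tau$ from $\beta$ up to its first arrival at $\alpha_{p_0}$ stays in $G^*$ except for the terminal node, and by uniqueness of paths in the acyclic core this initial segment is exactly $\tau_{\beta\alpha_{p_0}}$.

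Next I would analyze the portion of $\tau$ after it first reaches $\alpha_{p_0}$. Since $G_p$ is a single cycle and $G_p^+$ deprived of the back-edge $\alpha_{p_{\ell-1}}\alpha_{p_0}$ is a directed tree rooted at $\alpha_{p_0}$, the only edges available are those of the cycle and those of the tree descending from it. I would split according to whether $\alpha_i \in G_p$ or $\alpha_i$ lies strictly in the tree beyond the cycle. In either case, once the walk finally leaves the cycle toward $\alpha_i$ (or terminates on the cycle at $\alpha_i$), it can no longer return, because leaving the cycle means entering the tree part where no edge points back into $G_p$. Thus the walk consists of some number $m \geq 0$ of complete traversals of the cycle starting and ending at $\alpha_{p_0}$ — this is the closed walk $G_p^m$ — followed by the unique tree-path $\tau_{\alpha_{p_0}\alpha_i}$ from $\alpha_{p_0}$ to $\alpha_i$. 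Concatenating gives $\tau = \tau_{\beta\alpha_{p_0}}\, G_p^m\, \tau_{\alpha_{p_0}\alpha_i}$.

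The main obstacle I anticipate is the bookkeeping that rules out ``premature'' exits and re-entries: I must verify that once the walk steps off the cycle into the strict tree part, it is trapped there and can only proceed monotonically toward $\alpha_i$ along the unique tree-path, and symmetrically that it cannot wander into a different cycle's subtree $G_{p'}^+$ and come back. This relies crucially on reducedness — specifically that distinct cycles are disjoint, that each sits as a leaf-like structure off the core via a single entry node, and that the tree rooted at $\alpha_{p_0}$ has no edges returning to the cycle. I would formalize this by a clean induction on the length of $\tau$, or equivalently by tracking the walk's position relative to the (unique) topological layering of $G_p^+$, and invoking the uniqueness of paths in the tree to pin down both $\tau_{\alpha_{p_0}\alpha_i}$ and the return segments that make up $G_p^m$.
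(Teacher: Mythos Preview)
Your proposal is correct and follows essentially the same route as the paper: the paper treats the lemma as an ``immediate consequence'' of the two structural observations recorded just before it (uniqueness of $\tau_{\beta\alpha_{p_0}}$ with all but its endpoint in $G^*$, and the fact that $G_p^+$ minus the back-edge $\alpha_{p_{\ell-1}}\alpha_{p_0}$ is a directed tree rooted at $\alpha_{p_0}$), and your plan simply unpacks those observations into an explicit argument. Your worry about wandering into some other $G_{p'}^+$ is moot, since $V_p^+$ is closed under taking out-neighbors (any out-neighbor of a successor of $V_p$ is again a successor of $V_p$), so once $\tau$ enters $G_p^+$ it cannot leave.
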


Note that if we set $m = 0$, then the walk $\tau$ in the above lemma becomes a path. It follows that for every $\alpha$-node $\alpha_i$ in $G$, there is a unique path from $\beta$ to $\alpha_i$. We denote this path by $\tau_{\beta\alpha_i}$ and define the {\bf depth} of $\alpha_i$ as
\begin{equation}\label{eq:defdepth}
\dep(\alpha_i):=\ell(\tau_{\beta\alpha_i}).
\end{equation} 
To illustrate, consider again the digraph in Figure~\ref{fig:ex-reduced}. The nodes of depth~1 are $\{\alpha_1,\alpha_2\}$, the nodes of depth~2 are $\{\alpha_3,\alpha_4\}$, $\{\alpha_5,\alpha_6\}$ depth~3, $\{\alpha_7,\alpha_8\}$ depth~4, and $ \alpha_9$ is of depth~5.

\subsubsection{On reachable sets}
For a positive~integer $j$, let $V(j)$ be the set of $\alpha$-nodes~$\alpha_i\in G$ such that there is a walk~$\tau$ of length~$j$ from~$\beta$ to~$\alpha_i$. For each $p = 1,\ldots, q$, we let
$$V_p^+(j) := V_p^+ \cap V(j).$$
We describe below these sets and present relevant properties, which will be used for computing the columns of the infinite matrix $C(A, b)$. 

For ease of notation, let
$\ell_p:= \ell(G_p)$. We start with the following result: 

\begin{Lemma}\label{lem:existuniquewalk}
For any positive integer~$j$,  
\begin{equation}\label{eq:existuniquewalk}
V_p^+(j) = \{ \alpha_i\in V_p^+ \mid \dep(\alpha_i) \leq j  \mbox{ and } \dep(\alpha_i) \equiv_{\ell_p} j\}. 
\end{equation}
Moreover, for any $\alpha_i \in V_p^+(j)$, there is a unique walk of length~$j$ from $\beta$ to $\alpha_i$. 
\end{Lemma}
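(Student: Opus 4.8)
The plan is to reduce the statement to the walk-decomposition of Lemma~\ref{lem:walks} together with a single length computation. Fix $\alpha_i \in V_p^+$. By Lemma~\ref{lem:walks}, every walk $\tau$ from $\beta$ to $\alpha_i$ has the form $\tau = \tau_{\beta\alpha_{p_0}}\, G_p^m\, \tau_{\alpha_{p_0}\alpha_i}$ for some non-negative integer $m$ (the number of times the cycle $G_p$ is traversed). Conversely, each $m \geq 0$ yields exactly one such walk, since the two paths $\tau_{\beta\alpha_{p_0}}$ and $\tau_{\alpha_{p_0}\alpha_i}$ are unique and the closed walk $G_p^m$ is determined by $m$. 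Thus the walks from $\beta$ to $\alpha_i$ are in bijection with $\{0,1,2,\ldots\}$ via the parameter $m$.

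Next I would compute the length of such a walk. Using additivity of length under concatenation and $\ell(G_p^m) = m\ell_p$, one obtains
\begin{equation*}
\ell(\tau) = \ell(\tau_{\beta\alpha_{p_0}}) + m\ell_p + \ell(\tau_{\alpha_{p_0}\alpha_i}).
\end{equation*}
Taking $m = 0$ recovers the unique path $\tau_{\beta\alpha_i}$, so by the definition~\eqref{eq:defdepth} of depth we have $\ell(\tau_{\beta\alpha_{p_0}}) + \ell(\tau_{\alpha_{p_0}\alpha_i}) = \dep(\alpha_i)$. Hence the walk indexed by $m$ has length
\begin{equation*}
\ell(\tau) = \dep(\alpha_i) + m\ell_p.
\end{equation*}

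From here the characterization is immediate. A node $\alpha_i \in V_p^+$ lies in $V_p^+(j)$ iff there is a walk of length $j$ from $\beta$ to $\alpha_i$, i.e.\ iff the equation $\dep(\alpha_i) + m\ell_p = j$ admits a solution $m \in \{0,1,2,\ldots\}$. Such a solution exists precisely when $j - \dep(\alpha_i)$ is a non-negative multiple of $\ell_p$, which is exactly the pair of conditions $\dep(\alpha_i) \leq j$ and $\dep(\alpha_i) \equiv_{\ell_p} j$; this yields the identity~\eqref{eq:existuniquewalk}. For the ``moreover'' claim, observe that $\dep(\alpha_i) + m\ell_p = j$ has at most one non-negative integer solution $m$ (as $\ell_p \geq 1$); combined with the bijection between $m$-values and walks established in the first paragraph, there is exactly one walk of length $j$ from $\beta$ to $\alpha_i$ whenever $\alpha_i \in V_p^+(j)$.

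I do not anticipate a serious obstacle: the statement is essentially a bookkeeping corollary of Lemma~\ref{lem:walks}. The only point requiring genuine care is the length computation—specifically verifying $\ell(G_p^m) = m\ell_p$ and that the $m = 0$ walk realizes the depth—after which both the congruence/bound conditions and the uniqueness drop out of solving a single linear equation in one unknown.
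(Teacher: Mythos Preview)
Your proposal is correct and follows essentially the same approach as the paper's own proof: both invoke Lemma~\ref{lem:walks} to write any walk as $\tau_{\beta\alpha_{p_0}}\,G_p^m\,\tau_{\alpha_{p_0}\alpha_i}$, compute its length as $\dep(\alpha_i)+m\ell_p$, and then read off the characterization and uniqueness from solving for~$m$. The paper is slightly terser and writes down the unique walk explicitly as $\tau_{\beta\alpha_{p_0}}\,G_p^{(j-\dep(\alpha_i))/\ell_p}\,\tau_{\alpha_{p_0}\alpha_i}$, but the argument is the same.
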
 

\begin{proof}
Consider all the walks $\tau$ from $\beta$ to $\alpha_i$. By Lemma~\ref{lem:walks}, 
$$\ell(\tau) = \ell(\tau_{\beta\alpha_i}) + m\ell_p = \dep(\alpha_i) + m\ell_p, \quad \mbox{for } m \in \N.$$
Thus, amongst these walks, there exists a $\tau$ of length~$j$ if and only if~\eqref{eq:existuniquewalk} holds. Moreover, such a walk is unique given by
$$
\tau = \tau_{\beta \alpha_{p_0}} \, G_p^{\frac{j-\dep(\alpha_i)}{\ell_p}} \, \tau_{\alpha_{p_0}\alpha_i}.
$$
This completes the proof.
\end{proof}

Let $\alpha_{p_0}$ be given as in~\eqref{eq:definitialnode}. Amongst all nodes in $G^+_p$, $\alpha_{p_0}$ has the minimum depth. Thus, $V_p^+(j) =  \varnothing$ if and only if $j < \dep(\alpha_{p_0})$. As~$j$ increases, the sequence of sets $V_p^+(j)$ will eventually be periodic.  We make it precise below. Let  
\begin{equation*}\label{eq:defdmindmax}
d_p:= \max_{\alpha_i\in V_p^+}\dep(\alpha_i).
\end{equation*}
Note that $d_p \geq \ell_p - 1$. 
We have the following result (an example illustrating the result will be given at the end of the subsection):

\begin{Lemma}\label{lem:decomVp}
If $j > d_p - \ell_p$, then 
\begin{equation}\label{eq:periodicV}
V_p^+(j + \ell_p)  =  V_p^+(j), 
\end{equation}
and, moreover,  
\begin{equation}\label{eq:partitionV}
V_p^+ = \sqcup_{k = 0}^{\ell_p - 1} V_p^+(j + k).
\end{equation}  
\end{Lemma}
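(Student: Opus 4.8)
\textbf{Proof proposal for Lemma~\ref{lem:decomVp}.}

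The plan is to reduce both claims to the characterization of $V_p^+(j)$ provided by Lemma~\ref{lem:existuniquewalk}, namely the description
$$
V_p^+(j) = \{ \alpha_i\in V_p^+ \mid \dep(\alpha_i) \leq j  \mbox{ and } \dep(\alpha_i) \equiv_{\ell_p} j\}.
$$
For the periodicity~\eqref{eq:periodicV}, I would fix $j > d_p - \ell_p$ and show the two membership conditions for index $j$ agree with those for index $j + \ell_p$. The congruence condition is immediate, since $j + \ell_p \equiv_{\ell_p} j$, so $\dep(\alpha_i)\equiv_{\ell_p} j$ if and only if $\dep(\alpha_i)\equiv_{\ell_p} j+\ell_p$. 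The only thing to verify is that the depth bound $\dep(\alpha_i)\leq j$ can be upgraded to $\dep(\alpha_i)\leq j+\ell_p$ without admitting new nodes; here I would use $d_p = \max_{\alpha_i\in V_p^+}\dep(\alpha_i)$, so any $\alpha_i\in V_p^+$ automatically satisfies $\dep(\alpha_i)\leq d_p < j + \ell_p$ by the hypothesis $j > d_p - \ell_p$. Thus the depth constraint is vacuous at both indices $j$ and $j+\ell_p$, and membership is governed purely by the congruence, giving $V_p^+(j+\ell_p) = V_p^+(j)$.

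For the partition~\eqref{eq:partitionV}, the disjointness of the union follows from the same congruence observation: for $0 \leq k < k' \leq \ell_p - 1$, the indices $j+k$ and $j+k'$ are incongruent modulo $\ell_p$, so a node $\alpha_i$ with a fixed residue $\dep(\alpha_i)\bmod \ell_p$ can lie in at most one of the sets $V_p^+(j+k)$. To see that the union exhausts $V_p^+$, I would take an arbitrary $\alpha_i\in V_p^+$ and let $r\in\{0,\ldots,\ell_p-1\}$ be the residue with $\dep(\alpha_i)\equiv_{\ell_p} j + r$; by the same vacuity of the depth bound established above (valid because $j+k > d_p - \ell_p$ for every $k\geq 0$, hence $\dep(\alpha_i)\leq d_p < j+k+\ell_p$, and more directly $\dep(\alpha_i)\leq d_p < (j+k)$ once one checks the range), the node $\alpha_i$ lies in $V_p^+(j+r)$. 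The one point requiring slight care is whether the depth bound $\dep(\alpha_i)\leq j+r$ genuinely holds for the relevant $r$; since $\dep(\alpha_i)\leq d_p$ and $j+r \geq j+0 > d_p - \ell_p$, one has $j+r > d_p - \ell_p$, and combined with $\dep(\alpha_i)\equiv_{\ell_p} j+r$ and $\dep(\alpha_i)\geq \dep(\alpha_{p_0})$, the smallest admissible length of this residue class that is $\geq \dep(\alpha_i)$ is at most $j+r$. I would spell out this last inequality using $d_p \geq \ell_p - 1$ to confirm the residue $r$ indeed produces a valid length.

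The main obstacle I anticipate is precisely this bookkeeping around the depth bound: one must confirm that in the periodic regime $j > d_p - \ell_p$ the constraint $\dep(\alpha_i)\leq j+k$ is never the binding condition that excludes a node of the correct residue. Everything else is a routine modular arithmetic argument. The cleanest way to handle the obstacle is to observe that for every $\alpha_i\in V_p^+$ we have $\dep(\alpha_i)\leq d_p$, and to note that among the $\ell_p$ consecutive indices $j, j+1,\ldots, j+\ell_p-1$ each residue class modulo $\ell_p$ appears exactly once; choosing the index $j+r$ matching $\alpha_i$'s residue, the hypothesis $j > d_p - \ell_p$ guarantees $j + r > d_p - \ell_p$, and since within one full period the matching index lies in the window whose lower end already exceeds $d_p - \ell_p$, the value $j+r$ is at least $\dep(\alpha_i)$. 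Once this is verified, both~\eqref{eq:periodicV} and~\eqref{eq:partitionV} follow directly from Lemma~\ref{lem:existuniquewalk}.
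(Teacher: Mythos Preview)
Your approach is the same as the paper's---reduce everything to the description in Lemma~\ref{lem:existuniquewalk} and do modular arithmetic---and the overall outline is sound. There is, however, one genuine misstatement in your argument for~\eqref{eq:periodicV}. You assert that ``the depth constraint is vacuous at both indices $j$ and $j+\ell_p$,'' justified by $\dep(\alpha_i)\le d_p < j+\ell_p$. That inequality only shows vacuity at $j+\ell_p$; it does \emph{not} give $\dep(\alpha_i)\le j$, and in fact one can have $d_p - \ell_p < j < d_p$, so nodes of depth exceeding $j$ may well exist in $V_p^+$. The depth bound at index~$j$ is not vacuous on its own; it is only vacuous \emph{once the congruence is imposed}. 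The missing one-line argument is: if $\dep(\alpha_i)\equiv_{\ell_p} j$ and $\dep(\alpha_i) < j+\ell_p$, then necessarily $\dep(\alpha_i)\le j$, since the integers congruent to $j$ modulo $\ell_p$ lying strictly above $j$ start at $j+\ell_p$. This is precisely what the paper does (by contradiction), and it is also what your final paragraph is circling around when you write ``the value $j+r$ is at least $\dep(\alpha_i)$,'' but you never state the implication cleanly and instead invoke a ``window'' heuristic that does not by itself yield the inequality.

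Once you insert that step, both parts follow exactly as you outline, and your proof coincides with the paper's. For~\eqref{eq:partitionV} the paper is slightly slicker: it just observes that every $\alpha_i$ lies in $V_p^+(\dep(\alpha_i))$, that the sets with incongruent indices are disjoint, and then invokes~\eqref{eq:periodicV} to land in the window $\{j,\ldots,j+\ell_p-1\}$---no separate verification of the depth bound is needed at that stage.
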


\begin{proof}
We first establish~\eqref{eq:periodicV}. By~\eqref{eq:existuniquewalk}, $V^+_p(j) \subseteq V^+_p(j + \ell_p)$. Suppose to the contrary that there exists a node $\alpha_i$ such that 
$\alpha_i \in V^+_p(j + \ell_p) - V^+_p(j)$; 
then, using again~\eqref{eq:existuniquewalk}, we have that  
$j < \dep(\alpha_i) \leq j + \ell_p$ and $\dep(\alpha_i) \equiv_{\ell_p} j$. But then, $\dep(\alpha_i) = j + \ell_p > d_p$, which is a contradiction. 

We next establish~\eqref{eq:partitionV}. On one hand, each node $\alpha_i\in V^+_p$ belongs to  $V_p^+(j)$ for some~$j$ (precisely, for $j = \dep(\alpha_i) + m\ell_p$). On the other hand,  if $j \not\equiv_{\ell_p} j'$, then $V^+_p(j)$ does not intersect $V^+_p(j')$. Combining the arguments with~\eqref{eq:periodicV}, we conclude that~\eqref{eq:partitionV} holds. 
\end{proof}

We will now extend Lemma~\ref{lem:decomVp} to the subsets $V(j)$ for sufficiently large~$j$. To the end, let 
\begin{equation}\label{eq:defmaxlcm}
\ell_{\max} := \max_{p = 1}^q \{\ell_p\} \quad \mbox{and} \quad 
L:= \lcm_{p = 1}^q\{\ell_p\}.
\end{equation}
where $\lcm$ stands for the least common multiple. 

Recall that $V^+_{\rm cyc}$ is the set of successors of all nontrivial strong components  in $G$, and $G^+_{\rm cyc}$ is the subgraph induced by $V^+_{\rm cyc}$. Since $G$ is reduced,  
\begin{equation}\label{eq:partitionofGplus}
G^+_{\rm cyc} = \sqcup_{p = 1}^q G^+_p.
\end{equation}  
Also, recall that~$n^*$ is the order of $G^*$. We have the following result:

\begin{Lemma}\label{lem:repeatV}
Suppose that $j > n^*$ and $j > \max_{p = 1}^q (d_p - \ell_p)$;  
then, 
\begin{equation}\label{eq:partitionofVplus}
V(j + L)  =  V(j) \quad \mbox{and} \quad  V^+_{\rm cyc} = \cup_{k = 0}^{\ell_{\max} - 1} V(j + k). 
\end{equation}
Moreover, for any $k = 0,\ldots, \ell_{\max}-1$, 
\begin{equation}\label{eq:notcontain}
V(j+k) \subsetneq V^+_{\rm cyc} - V(j+k).
\end{equation} 
\end{Lemma}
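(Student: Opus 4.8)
The plan is to reduce the global reachable sets $V(j)$ to the per-component sets $V_p^+(j)$, for which periodicity and partition are already supplied by Lemma~\ref{lem:decomVp}, and then reassemble. The single structural fact that makes everything factor through the per-component results is
$$
V(j) = \bigsqcup_{p=1}^q V_p^+(j) \qquad \text{for all } j > n^*.
$$
The crux is the containment $V(j) \subseteq V^+_{\rm cyc}$: if some $\alpha_i\in V^*$ lay in $V(j)$, a length-$j$ walk from $\beta$ to $\alpha_i$ would either stay inside the acyclic core $G^*$ — forcing it to be the unique path of length $\dep(\alpha_i)\le n^*-1 < j$, a contradiction — or else pass through a node of $V^+_{\rm cyc}$, which would make $\alpha_i$ a successor of a cycle and hence put $\alpha_i\in V^+_{\rm cyc}=V-V^*$, again a contradiction. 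Granting $V(j)\subseteq V^+_{\rm cyc}$, I intersect the partition $V^+_{\rm cyc}=\sqcup_p V_p^+$ of~\eqref{eq:partitionofGplus} with $V(j)$ and use $V_p^+(j)=V_p^+\cap V(j)$ to obtain the displayed identity.

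With this identity, \eqref{eq:partitionofVplus} is essentially bookkeeping. The hypothesis $j>\max_p(d_p-\ell_p)$ lets me invoke Lemma~\ref{lem:decomVp} for every component at once. For periodicity, since $\ell_p\mid L$ I iterate $V_p^+(\cdot+\ell_p)=V_p^+(\cdot)$ a total of $L/\ell_p$ times (each intermediate argument still exceeds $d_p-\ell_p$) to get $V_p^+(j+L)=V_p^+(j)$; taking the disjoint union over $p$ and noting $j+L>n^*$ yields $V(j+L)=V(j)$. For the covering, \eqref{eq:partitionV} gives $V_p^+=\sqcup_{k=0}^{\ell_p-1}V_p^+(j+k)$, and since $\ell_p\le\ell_{\max}$ the union over $k=0,\ldots,\ell_{\max}-1$ still equals $V_p^+$; unioning over $p$ gives $V^+_{\rm cyc}=\cup_{k=0}^{\ell_{\max}-1}V(j+k)$.

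For \eqref{eq:notcontain} I read the assertion as the proper containment $V(j+k)\subsetneq V^+_{\rm cyc}$, i.e.\ $V^+_{\rm cyc}-V(j+k)\ne\varnothing$. To exhibit a witness I select a component $G_p$ whose cycle has length $\ell_p\ge 2$: the cycle nodes $\alpha_{p_0}\cdots\alpha_{p_{\ell_p-1}}$ have consecutive depths $\dep(\alpha_{p_0}),\ldots,\dep(\alpha_{p_0})+\ell_p-1$ and so realize every residue modulo $\ell_p$; picking the cycle node whose depth is \emph{not} congruent to $j+k$ modulo $\ell_p$, \eqref{eq:existuniquewalk} shows it lies outside $V_p^+(j+k)\subseteq V(j+k)$ yet inside $V^+_{\rm cyc}$.

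The one genuine obstacle is precisely this last step: the residue argument needs a cycle of length at least two. In the degenerate configuration where \emph{every} strong component is a single self-loop ($\ell_p=1$ for all $p$, hence $\ell_{\max}=1$), Lemma~\ref{lem:existuniquewalk} forces $V(j)=V^+_{\rm cyc}$, so the strict containment in \eqref{eq:notcontain} simply fails. I would therefore either impose the standing assumption $\ell_{\max}\ge 2$ — handling the all-self-loop case separately, where the rank computation of Subsection~\ref{ssec:submatricescab} is elementary — or weaken \eqref{eq:notcontain} to exclude it. Determining the cleanest fix requires tracing exactly how \eqref{eq:notcontain} is consumed downstream; everything else in the lemma is robust and follows from the reduction to $V_p^+(j)$ together with Lemma~\ref{lem:decomVp}.
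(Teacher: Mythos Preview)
Your proof of \eqref{eq:partitionofVplus} is correct and essentially identical to the paper's: both establish $V(j)=\sqcup_p V_p^+(j)$ for $j>n^*$ (the paper phrases this as ``$V(j)$ is a subset of $V^+_{\rm cyc}$'' together with \eqref{eq:partitionofGplus}) and then invoke Lemma~\ref{lem:decomVp} componentwise, using $\ell_p\mid L$ for periodicity and $\ell_p\le \ell_{\max}$ for the covering.

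For \eqref{eq:notcontain}, your reading of the evidently mistyped containment as $V(j+k)\subsetneq V^+_{\rm cyc}$ is the same one the paper's proof implicitly targets, and the mechanisms coincide: the paper selects $G_p$ with $\ell_p=\ell_{\max}$ and appeals to the partition \eqref{eq:partitionV} of $V_p^+$ to exhibit a node of $V_p^+$ outside $V_p^+(j+k)$ --- hence outside $V(j+k)$ by the disjointness in \eqref{eq:partitionofGplus} --- exactly as your residue argument on the cycle nodes does. (Minor point: your phrase ``the cycle node whose depth is not congruent'' should be ``a cycle node,'' since for $\ell_p\ge 3$ there are several.) Your diagnosis of the boundary case $\ell_{\max}=1$ is accurate and applies verbatim to the paper's argument: when every nontrivial component is a self-loop, $V(j)=V^+_{\rm cyc}$ and the strict inclusion fails. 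Your instinct that this is harmless downstream is also correct: Proposition~\ref{prop:sufficiency} uses \eqref{eq:notcontain} only to claim that the sets $V'_\ell$ are nonempty, and when $\ell_{\max}=1$ there is just one such set $V'_0=V^+_{\rm cyc}$, whose nonemptiness is automatic.
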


\begin{proof}
Since $G^*$ is a path without self-loop, for any $j > n^*$, $V(j)$ is a subset of $V^+_{\rm cyc}$.  
The fact that~\eqref{eq:partitionofVplus} holds then follows directly from Lemma~\ref{lem:decomVp} and~\eqref{eq:partitionofGplus}. To establish~\eqref{eq:notcontain}, we let $G_p$ be such that $\ell_p = \ell_{\max}$. On one hand, we have $V^+_p(j + k) \subseteq V(j + k)$. On the other hand, it follows from~\eqref{eq:partitionV} and~\eqref{eq:partitionofGplus} that 
$$
V^+_p(j + k) \cap (V^+_{\rm cyc} - V(j+k)) = \varnothing.
$$
This completes the proof. 
\end{proof}

\begin{Example}\normalfont
Consider the reduced graph $G$ in Figure~\ref{fig:ex-reduced}. The core $G^*$ has $n^* = 2$ nodes. Let $G_1:= \alpha_3\alpha_5\alpha_3$ and $G_2:= \alpha_2\alpha_4\alpha_6\alpha_2$ be the two cycles in $G$. 
Then, $G_1^+$ is the subgraph induced by $V^+_1=\{\alpha_3,\alpha_5,\alpha_7,\alpha_9\}$ and $G_2^+$ is the subgraph induced by $V^+_2=\{\alpha_2,\alpha_4,\alpha_6, \alpha_8\}$.   
We have that $\ell_1 = 2$, $\ell_2 = 3$, $d_1 = 5$, and $d_2 = 4$, so $\ell_{\max} = 3$ and $L = \lcm\{\ell_1,\ell_2\}= 6$. For $j \geq 4$, we have that
$$
V_1^+(j) = 
\begin{cases} 
\{\alpha_3,\alpha_7\} & \mbox{if } j \equiv_2 0, \\
\{\alpha_5,\alpha_9\} & \mbox{if } j \equiv_2  1.
\end{cases}
$$
Similarly, for $j \geq 2$, we have that
$$
V_2^+(j) = 
\begin{cases} 
\{\alpha_6\} & \mbox{if } j \equiv_3 0, \\
\{\alpha_2,\alpha_8\} & \mbox{if } j \equiv_3  1, \\
\{\alpha_4\} & \mbox{if } j \equiv_3  2.
\end{cases}
$$
Finally, for $j \geq 4$, we obtain that
$$
V^+(j) = 
\begin{cases} 
\{\alpha_3,\alpha_6,\alpha_7\} & \mbox{if } j \equiv_6 0, \\
\{\alpha_2,\alpha_5,\alpha_8,\alpha_9\} & \mbox{if } j \equiv_6  1, \\
\{\alpha_3,\alpha_4,\alpha_7\} & \mbox{if } j \equiv_6  2, \\
\{\alpha_5,\alpha_6,\alpha_9\} & \mbox{if } j \equiv_6  3, \\
\{\alpha_2,\alpha_3,\alpha_7,\alpha_8\} & \mbox{if } j \equiv_6  4, \\
\{\alpha_4,\alpha_5,\alpha_9\} & \mbox{if } j \equiv_6  5.
\end{cases}
$$
Note that the $V^+(j)$'s satisfy~\eqref{eq:partitionofVplus} and~\eqref{eq:notcontain}. \hfill{\qed}
\end{Example}

\subsection{Construction of the $(A, b)$ pair}\label{ssec:constructionAb}
Let $G\in \G_{n,1}$ be reduced. In this subsection, we will construct a pair $(A, b)$ in $\V(G)$ and show, toward the end of the section, that the pair is averaged controllable.

Since the edges of $G = (V, E)$ correspond one-to-one to the ``free entries'' of the $(A, b)$ pair (specifically, edge $\alpha_j\alpha_i$ corresponds to entry $a_{ij}$ and edge $\beta\alpha_i$ corresponds to entry $b_i$), we construct the matrix pair by assigning to each edge of~$G$ a continuous function. 
As announced earlier, these functions will be of the form $f^{\nu(e)}$, where $f: \Sigma \to \R_{\geq 0}$ is a continuous function and $\nu: E\to \R_{\geq 0}$ determines the power of~$f$. We define~$f$ and~$\nu$ below.

\subsubsection{Construction of~$f$} 
Let $(\mathcal{U}, \phi)$ be a chart on $\Sigma$, i.e., $\mathcal{U}$ is an open set in $\Sigma$ and $\phi: \mathcal{U} \to \mathcal{V}$ is a diffeomorphism, with $\mathcal{V}\in \R^n$ an open neighborhood of~$0$ in $\R^n$.   
Without loss of generality, we can assume that $\mathcal{U}$ is contained in the support of the measure~$\mu$ (introduced at the beginning of Subsection~\ref{ssec:problemformulation}) and that
$\mathcal{V}$ contains the closed unit ball~$B$: 
$$B := \{z\in \R^n \mid \|z\| \leq 1\}.$$ 
Let $\tilde f: B \to \R$ be defined as: 
\begin{equation}\label{eq:deftildef}
\tilde f(z) := \|z\|.
\end{equation}
Let $K:= \phi^{-1}(B)$. 
We then define $f:\Sigma \to \R$ as 
\begin{equation}\label{eq:deff}
f(\sigma) := 
\begin{cases}
\tilde f(\phi(\sigma)) & \mbox{if } \sigma\in K, \\
1 & \mbox{otherwise}.
\end{cases}
\end{equation}
It should be clear that $f$ is continuous and that the set $\{f^k\}_{k\in \N}$ is linearly independent, where $f^0 = \mathbf{1}$ is the constant function taking value $1$ everywhere.   
Further, we let $\mathbb{F}$ be the uniform closure of the space spanned by~$f^k$ for all $k\in \N$, 
$$\mathbb{F}:= \mbox{uniform closure of the span}\{f^k\}_{k\in \N}.$$ 
The following lemma follows directly from the Stone-Weierstrass theorem:

\begin{Lemma}\label{lem:spanf}
The space $\bF$ comprises all continuous functions $g:\Sigma \to \R$ such that $g$ is constant over $\Sigma - K$ and that $g(\sigma) = g(\sigma')$ for any two points $\sigma, \sigma'\in K$, with $\phi(\sigma) = \phi(\sigma')$.    
\end{Lemma}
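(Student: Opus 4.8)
The plan is to recognize Lemma~\ref{lem:spanf} as a direct invocation of the Stone–Weierstrass theorem applied to a suitable compact space and subalgebra, followed by a translation step back to $\Sigma$. First I would pass from $\Sigma$ to a compact quotient. Since $f$ is constant (equal to $1$) on $\Sigma - K$ and factors through $\phi$ on $K$ via $\tilde f(z) = \|z\|$, every power $f^k$ is constant on $\Sigma - K$ and depends on $\sigma\in K$ only through $\|\phi(\sigma)\|$. The natural compact space to work on is therefore the interval $[0,1]$ (the range of $\tilde f$), together with the single extra ``point at infinity'' representing $\Sigma - K$. Concretely, I would introduce the map $r:\Sigma\to[0,1]$ defined by $r(\sigma) = f(\sigma)$; observe $r(\sigma)=\|\phi(\sigma)\|$ on $K$ and $r\equiv 1$ off $K$. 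A continuous $g:\Sigma\to\R$ lies in the closure of $\Span\{f^k\}$ precisely when it factors as $g = h\circ r$ for some continuous $h:[0,1]\to\R$, and the stated description (constant off $K$, agreeing on fibers of $\phi$ inside $K$) is exactly the condition that such a factorization exists.

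The core step is the Stone–Weierstrass argument on $[0,1]$. I would let $\cA$ be the uniform closure of the algebra generated by the monomials $\{t^k\}_{k\in\N}$ in $\C^0([0,1],\R)$. This set contains the constants ($t^0=1$) and separates points of $[0,1]$ (the identity $t$ already does), so by Stone–Weierstrass $\cA = \C^0([0,1],\R)$. Pulling back through $r$, every continuous $h:[0,1]\to\R$ yields $h\circ r \in \bF$, because uniform approximation of $h$ by polynomials in $t$ pulls back to uniform approximation of $h\circ r$ by the corresponding polynomials in $f$ (the pullback map $h\mapsto h\circ r$ is a norm-nonincreasing algebra homomorphism, hence continuous for the uniform topology). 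This establishes that $\bF$ contains every function of the form $h\circ r$.

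For the converse inclusion, I would check that each generator $f^k$, and hence every element of the spanned algebra, already satisfies the two structural constraints: it is constant on $\Sigma - K$ (taking value $1$), and it takes equal values at $\sigma,\sigma'\in K$ whenever $\phi(\sigma)=\phi(\sigma')$, since $f^k(\sigma)=\|\phi(\sigma)\|^k$ there. These constraints are closed under the uniform limit — uniform convergence preserves equality of values at any fixed pair of points and preserves constancy on a fixed set — so the whole closure $\bF$ consists of functions obeying them. Conversely any $g$ obeying both constraints factors through $r$ via a function $h$ on the image of $r$, which one extends continuously to $[0,1]$ (the image of $r$ is a compact subinterval containing $1$), placing $g = h\circ r$ in $\bF$ by the previous paragraph. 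Combining the two inclusions yields the claimed characterization.

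The main obstacle I anticipate is the careful handling of the point $\sigma\in\Sigma-K$ and the value $f=1$ on it relative to the fiber structure on $K$. The subtlety is that the value $1$ is attained both on $\Sigma - K$ and on the boundary sphere $\{\sigma\in K : \|\phi(\sigma)\|=1\}$, so the fiber of $r$ over $1$ lumps these together; I must verify that the stated constraints (constancy off $K$, plus fiberwise-equality inside $K$) are consistent with — and in fact exactly cut out — factorization through $r$, rather than through the finer quotient that separates $\Sigma - K$ from the boundary sphere. This is precisely why the lemma's two conditions are stated the way they are, and confirming that they are neither too strong nor too weak is the delicate point; the remaining Stone–Weierstrass machinery is routine.
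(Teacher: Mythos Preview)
Your Stone--Weierstrass argument on $[0,1]$, pulled back through $r=f$, is exactly the justification the paper has in mind; the lemma is stated there without proof and attributed directly to the Stone--Weierstrass theorem. One point worth flagging: the literal hypothesis ``$\phi(\sigma)=\phi(\sigma')$'' is vacuous because $\phi$ is a diffeomorphism (hence injective on $K$), so what your argument actually establishes---and what is used downstream to place each $f^r$, $r\ge 0$, in $\bF$---is the characterization via fibers of $f$ (equivalently of $\|\phi(\cdot)\|$), which is almost certainly the intended reading.
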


\subsubsection{Construction of $\nu$}  
Recall that $\dep(\alpha_i)$ is the depth of $\alpha_i$ defined in~\eqref{eq:defdepth}. Relabel, if necessary, the $\alpha$-nodes in~$G$ such that 
\begin{equation*}\label{eq:relable}
\mbox{if } \dep(\alpha_i) < \dep(\alpha_j), \mbox{ then } i < j. 
\end{equation*} 
This can be done by using, for example, the breadth-first search algorithm. The labeling of the $\alpha$-nodes of the graph shown in Figure~\ref{fig:ex-reduced} satisfies this property.

We will now partition the edge set $E$ into five subsets $E_i$, for $i = 1,\ldots,5$, and define $\nu |_{E_i}$. Let $V_{\rm appx}:= V^+_{\rm cyc} - V_{\rm cyc}$, and $G_{\rm appx} = (V_{\rm appx}, E_{\rm appx})$ be the subgraph induced by $V_{\rm appx}$. We call $G_{\rm appx}$ the appendix of $G$. Define the subsets $E_i$ of $E$ as follows:
\begin{equation}\label{eq:5subsets}
\begin{cases}
E_1:= E^*, \\
E_2:= \{v_i \alpha_j \in E \mid v_i \in G^* \mbox{ and } \alpha_j \in G_{\rm cyc}\}, \\
E_3:= E_{\rm cyc}, \\
E_4:= \{\alpha_i\alpha_j \in E \mid \alpha_i \in G_{\rm cyc} \mbox{ and } \alpha_j \in G_{\rm appx}\}, \\
E_5:= E_{\rm appx}.
\end{cases}
\end{equation}
The edges in $E_2$ link nodes from $G^*$ to $G_{\rm cyc}$, and edges in $E_4$ link nodes from $G_{\rm cyc}$ to $G_{\rm appx}$.  

Let $\lambda$ be a positive and {\em irrational} number, which we fix in the sequel. We now define $\nu(e)$ as follows:   

{\it Case 1: $e\in E_1$.} We set $\nu(e):=0$.

{\it Case 2: $e\in E_2$.} Let $G_p$ be the cycle incident to~$e$, and $\alpha_{p_0}$ be given as in~\eqref{eq:definitialnode}. Then, $e$ can be written as $e = v_i\alpha_{p_0}$.   
We set $\nu(e):= p_0 \lambda$. 

{\it Case 3: $e \in E_3$.} Let $G_p$ be the cycle that contains~$e$. We write $G_p$ explicitly as $G_p = \alpha_{p_0} \alpha_{p_1} \cdots \alpha_{p_{\ell_p-1}}\alpha_{p_0}$. Then, we set
$$
\nu(e):= 
\begin{cases}
\frac{\ell_p}{L} & \mbox{if } e = \alpha_{p_{\ell_p - 1}} \alpha_{p_0} \\
0 & \mbox{otherwise}, 
\end{cases}
$$
where $L$ is defined in~\eqref{eq:defmaxlcm}. 

{\it Case 4: $e\in E_4$.} Let $G_p$ be the cycle and $\alpha_j\in G_{\rm appx}$ be the node incident to~$e$. We still let $\alpha_{p_0}$ be given as in~\eqref{eq:definitialnode}. We then set $\nu(e) = (j - p_0)\lambda$. Note that $(j - p_0) > 0$ because $\dep(\alpha_j) > \dep(\alpha_{p_0})$. 

{\it Case 5: $e\in E_5$.} We write $e = \alpha_i \alpha_j$ and set $\nu(e) := (j - i) \lambda$. Note that $(j - i) > 0$ because $\dep(\alpha_j) - \dep(\alpha_i) = 1$. 

We illustrate the map $\nu$ in Figure~\ref{fig:ex-reducedweight} for the graph shown in Figure~\ref{fig:ex-reduced}. 

\begin{figure*}[htb!]
\begin{center}
\begin{tikzpicture}[scale=1.0,node distance=1.4cm]   
\node[main node] (1)  {$\alpha_1$};  
\node[main node] (b) [below of= 1] {$\beta$};
    \node[main node] (2) [left of=1]  {$\alpha_2$};    
    \node[main node] (5) [above 
    left  of=2]  {$\alpha_4$};
    \node[main node] (6) [below left of= 5] {$\alpha_6$};    
    \node[main node] (7) [left of= 6]  {$\alpha_8$};

    \node[main node] (3) [right of= 1] {$\alpha_3$};
    \node[main node] (4) [above right of= 3] {$\alpha_5$};    
    \node[main node] (8) [below right of= 4] {$\alpha_7$};
    \node[main node] (9) [right of= 8] {$\alpha_9$};    
    
    \path[draw,shorten >=2pt,shorten <=2pt]    
    (1) edge[-latex] node[above] {\footnotesize $3\lambda$} (3)    
    (6) edge[-latex] node[below] {\footnotesize $\frac{1}{2}$} (2)
    (6) edge[-latex] node[above] {\footnotesize $6\lambda$} (7)
    (4) edge[-latex] node[label={[font=\footnotesize,xshift=.2cm,yshift=-.2cm]:$4\lambda$}] {} (8)
    (5) edge[-latex] node[label={[font=\footnotesize,xshift=-.1cm,yshift=-.2cm]:0}] {} (6)  
    (2) edge[-latex] node[label={[font=\footnotesize,xshift=.1cm,yshift=-.2cm]:0}] {} (5)
    (8) edge[-latex] node[above] {\footnotesize $2\lambda$} (9)
    (b) edge[-latex] node[right] {\footnotesize $0$}  (1)
    (b) edge[-latex] node[left] {\footnotesize $2\lambda$} (2)    
    (3) edge[bend left=15,-latex] node[label={[font=\footnotesize,xshift=-.1cm,yshift=-.2cm]:0}] {} (4)    
    (4) edge[bend left=15,-latex] node[label={[font=\footnotesize,xshift=.2cm,yshift=-0.55cm]:$\frac{1}{3}$}] {}
    (3);              
    \end{tikzpicture}
\end{center}
\caption{Illustration of the map $\nu$ on the digraph $G$ shown in Figure~\ref{fig:ex-reduced}. The values of $\nu(e)$ are treated as edge weights and are shown on the corresponding edges $e$. The 5 subsets $E_i$, introduced in~\eqref{eq:5subsets}, are given by $E_1:= \{\beta\alpha_1\}$, $E_2=\{\beta\alpha_2,\alpha_1\alpha_3\}$, $E_3=\{\alpha_3\alpha_5,\alpha_5\alpha_3\}\cup \{\alpha_2\alpha_4,\alpha_4\alpha_6,\alpha_6\alpha_2\}$, $E_4 = \{\alpha_5\alpha_7,\alpha_6\alpha_8\}$, and $E_5 = \{\alpha_7\alpha_9\}$. Note that $ L = 6$.} \label{fig:ex-reducedweight}
\end{figure*}
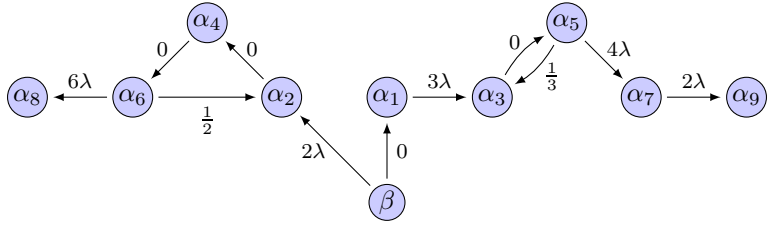

\subsection{Computation of $C(A, b)$}\label{ssec:computationcab} 
Let $(A, b)\in \V(G)$ be given as above, and $C(A, b)$ be defined as in~\eqref{eq:defcab}. 
In this subsection, we present explicit expressions of the entries of the matrix $C(A, b)$.      

To this end, let $\cT$ be the set of walks in $G$. We extend $\nu: E\to \R_{\geq 0}$ by defining the map over the set~$\cT$.  
Specifically, for a walk $\tau = v_{i_1} \cdots v_{i_k}$, we define
\begin{equation}\label{eq:deftauonwalks}
\nu(\tau) := \sum_{j = 1}^{k-1} \nu(v_{i_j}v_{i_{j+1}}).
\end{equation}

Let $\tau$ be a walk from $\beta$ to some $\alpha_i$. We compute below $\nu(\tau)$. 
The case where $\alpha_i \in G^*$ is simple: The only walk from $\beta$ to $\alpha_i$ in $G$ is the path $\tau_{\beta\alpha_i}$, which is contained in $G^*$. Since $\nu(e) = 0$ for all $e\in G^*$, $\nu(\tau_{\beta\alpha_i}) = 0$. 
For the other cases, we have the following result: 

\begin{Lemma}\label{lem:degreesforCAB}
Let $\alpha_i \in G_p^+$ and $\tau$ be a walk from $\beta$ to $\alpha_i$. Then,  
\begin{equation*}\label{eq:degreesforCAB}
\nu(\tau)= 
\begin{cases}
p_0 \lambda + \frac{\ell(\tau)-\dep(\alpha_i)}{L} & \mbox{if } \alpha_i \in G_p, \vspace{.1cm} \\
i \lambda   + \frac{\ell(\tau)-\dep(\alpha_i)}{L} & \mbox{if } \alpha_i \in G_p^+ - G_p. 
\end{cases}
\end{equation*}
\end{Lemma}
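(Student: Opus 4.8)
The plan is to compute $\nu(\tau)$ directly from the walk decomposition given in Lemma~\ref{lem:walks}, exploiting the fact that $\nu$ is additive along concatenations. By Lemma~\ref{lem:walks}, any walk $\tau$ from $\beta$ to $\alpha_i\in G_p^+$ factors as $\tau = \tau_{\beta\alpha_{p_0}}\, G_p^m\, \tau_{\alpha_{p_0}\alpha_i}$, and since $\nu$ was extended to walks additively in~\eqref{eq:deftauonwalks}, we get $\nu(\tau) = \nu(\tau_{\beta\alpha_{p_0}}) + m\,\nu(G_p) + \nu(\tau_{\alpha_{p_0}\alpha_i})$. So the first step is to evaluate each of these three pieces separately using the case-by-case definition of $\nu$ in Subsection~\ref{ssec:constructionAb}.

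First I would compute $\nu(\tau_{\beta\alpha_{p_0}})$. This path runs through $G^*$ (where every edge lies in $E_1$ and contributes $0$) and ends with the single edge entering $G_p$, which lies in $E_2$. By the \emph{Case 2} definition, that edge $v_i\alpha_{p_0}$ contributes $p_0\lambda$, so $\nu(\tau_{\beta\alpha_{p_0}}) = p_0\lambda$. Next I would compute $\nu(G_p)$ for one traversal of the cycle: by the \emph{Case 3} definition, all edges of the cycle contribute $0$ except the closing edge $\alpha_{p_{\ell_p-1}}\alpha_{p_0}$, which contributes $\ell_p/L$; hence $\nu(G_p^m) = m\,\ell_p/L$. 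Here I would invoke Lemma~\ref{lem:existuniquewalk} (or the length bookkeeping $\ell(\tau) = \dep(\alpha_i) + m\ell_p$ from its proof) to rewrite $m\ell_p = \ell(\tau) - \dep(\alpha_i)$, turning this term into $\bigl(\ell(\tau)-\dep(\alpha_i)\bigr)/L$, which is exactly the common summand appearing in both branches of the claim.

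The remaining step is to evaluate $\nu(\tau_{\alpha_{p_0}\alpha_i})$, and this is where the two cases split. If $\alpha_i \in G_p$, then the unique path from $\alpha_{p_0}$ to $\alpha_i$ stays inside the cycle, so all its edges lie in $E_3$ and are non-closing (the closing edge has already been accounted for in the $G_p^m$ factor), giving $\nu(\tau_{\alpha_{p_0}\alpha_i}) = 0$; adding the pieces yields $p_0\lambda + (\ell(\tau)-\dep(\alpha_i))/L$, the first branch. If instead $\alpha_i\in G_p^+ - G_p$, the path $\tau_{\alpha_{p_0}\alpha_i}$ first traverses cycle edges (in $E_3$, contributing $0$) up to some node $\alpha_{k}\in G_p$, then crosses one edge of $E_4$ into the appendix, and finally a sequence of $E_5$ edges. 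The main obstacle is confirming that this mixed sum of $E_4$ and $E_5$ contributions telescopes to exactly $i\lambda$. Using \emph{Case 4} the crossing edge contributes $(j-p_0)\lambda$ where $\alpha_j$ is its appendix endpoint, and by \emph{Case 5} each subsequent edge $\alpha_r\alpha_s$ contributes $(s-r)\lambda$; since the labeling is depth-monotone and these successive edges form a path in the appendix tree, the indices telescope so that the total is $(i - p_0)\lambda$. Combining with the $p_0\lambda$ from $\nu(\tau_{\beta\alpha_{p_0}})$ gives $i\lambda + (\ell(\tau)-\dep(\alpha_i))/L$, the second branch, completing the proof. The one point requiring care is that the $E_3$ edges traversed before crossing into the appendix genuinely contribute zero, i.e.\ that the crossing does not occur at the closing edge $\alpha_{p_{\ell_p-1}}\alpha_{p_0}$; this follows because $\alpha_{p_0}$ is the unique entry node and $\tau_{\alpha_{p_0}\alpha_i}$ is a simple path leaving the cycle, never closing it.
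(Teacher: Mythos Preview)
Your proposal is correct and follows essentially the same argument as the paper: decompose $\tau$ via Lemma~\ref{lem:walks}, compute $\nu$ on each of the three pieces from the case-by-case definition of $\nu$, and use $m\ell_p = \ell(\tau) - \dep(\alpha_i)$ to identify the rational part. You supply more detail than the paper (which simply asserts the values of $\nu(\tau_{\beta\alpha_{p_0}})$, $\nu(G_p)$, and $\nu(\tau_{\alpha_{p_0}\alpha_i})$ ``by construction''), in particular the telescoping of the $E_4$ and $E_5$ contributions and the observation that $\tau_{\alpha_{p_0}\alpha_i}$ avoids the closing edge, but the route is the same.
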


\begin{proof}
By Lemma~\ref{lem:walks}, we can express $\tau$ as
$\tau = \tau_{\beta\alpha_{p_0}} G_p^m \tau_{\alpha_{p_0}\alpha_i}$. 
It follows from~\eqref{eq:deftauonwalks} that 
$$\nu(\tau) = \nu(\tau_{\beta\alpha_{p_0}}) + m \nu(G_p) + \nu(\tau_{\alpha_{p_0}\alpha_i}).$$
By construction of~$\nu$, we have that $\nu(\tau_{\beta\alpha_{p_0}}) = p_0 \lambda$, $\nu(G_p) = \frac{\ell_p}{L}$, and 
$$
\nu(\tau_{\alpha_{p_0}\alpha_i}) = 
\begin{cases}
0 & \mbox{if } \alpha_ i \in G_p,\\
(i - p_0)\lambda & \mbox{if } \alpha_i \in G^+_p - G_p.
\end{cases}
$$
Finally, by the fact that $\tau_{\beta\alpha_i} = \tau_{\beta\alpha_{p_0}}\tau_{\alpha_{p_0}\alpha_i}$ and $\dep(\alpha_i) = \ell(\tau_{\beta\alpha_i})$, we have 
$$
m = \frac{\ell(\tau)-\dep(\alpha_i)}{\ell_p},
$$
which completes the proof. 
\end{proof}

For a later purpose, we express $\nu(\tau)$ as a sum of two parts: 
$$\nu(\tau) = \nu_1(\tau) \lambda  +  \nu_2(\tau)\frac{1}{L}, $$
where $\nu_1(\tau)$ and $\nu_2(\tau)$ are given by
\begin{equation}\label{eq:defnu1}
\nu_1(\tau) := 
\begin{cases}
0 &  \mbox{if } \alpha_i\in G^*, \\
p_0 & \mbox{if } \alpha_i\in G_p \mbox{ for some cycle }G_p, \\
i & \mbox{if } \alpha_i \in G_{\rm appx}.
\end{cases} 
\end{equation}
and
\begin{equation}\label{eq:defnu2}
\nu_2(\tau) := 
\begin{cases}
0 & \mbox{if } \alpha_i\in G^*,  \\ 
\ell(\tau)-\dep(\alpha_i) & \mbox{if } \alpha_i \in G_{\rm cyc}^+.
\end{cases}
\end{equation}
We call $\nu_1(\tau)$ the {\bf irrational part}  and $\nu_2(\tau)$ the {\bf rational part} of $\nu(\tau)$.

Recall that $V(j)$ comprises all the $\alpha_i$ such that there is a walk $\tau$ of length~$j$ in $G$ from $\beta$ to $\alpha_i$. By Lemma~\ref{lem:existuniquewalk}, such a walk $\tau$ is unique for each $\alpha_i \in V(j)$.  Also, recall that the function $f:\Sigma\to \R_{\geq 0}$ is defined in~\eqref{eq:deftildef} and~\eqref{eq:deff}.   

We  have the following result for the matrix $C(A,b)$ introduced in~\eqref{eq:defcab}: 

\begin{Lemma}\label{lem:computentries}
Let $c_{ij}$ be the $ij$th entry of $C(A, b)$. Then, 
$$
c_{ij} = 
\begin{cases}
\displaystyle\int_\Sigma f^{\nu(\tau)} \rd\mu & \mbox{if } \alpha_i\in V(j), \\
0 & \mbox{otherwise}, 
\end{cases}
$$
where $\tau$ is the unique walk of length~$j$ from $\beta$ to $\alpha_i$.
\end{Lemma}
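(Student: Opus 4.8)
The plan is to read off each entry $c_{ij}$ of $C(A,b)$ directly from the combinatorial formula for products of matrix entries along walks, using the construction of $(A,b)$ via the function $f$ and the exponent map $\nu$. Recall the general principle stated in the sketch: the $ij$th entry of $C(A,b)$ equals $\int_\Sigma \left(\sum_\tau P(\tau)\right)\rd\mu$, where the sum ranges over all walks $\tau$ of length~$j$ from $\beta$ to $\alpha_i$, and $P(\tau)$ is the product of the $(A,b)$-entries corresponding to the edges traversed by $\tau$. So the whole proof reduces to (i) identifying which walks contribute, and (ii) evaluating $P(\tau)$ for each contributing walk.

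\emph{First} I would handle the vanishing case. If $\alpha_i\notin V(j)$, then by the very definition of $V(j)$ there is no walk of length~$j$ from $\beta$ to $\alpha_i$, so the sum $\sum_\tau P(\tau)$ is empty and $c_{ij}=0$. \emph{Next}, suppose $\alpha_i\in V(j)$. By Lemma~\ref{lem:existuniquewalk} (for $\alpha_i\in G_{\rm cyc}^+$) and by the uniqueness of the $\beta$-to-$\alpha_i$ path in $G^*$ (for $\alpha_i\in G^*$), there is exactly \emph{one} walk $\tau$ of length~$j$ from $\beta$ to $\alpha_i$. Hence the sum collapses to the single term $P(\tau)$, and it remains to show $P(\tau)=f^{\nu(\tau)}$.

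\emph{Then} I would compute $P(\tau)$ from the construction of $(A,b)$. Each nonzero entry of $(A,b)$ assigned to an edge $e$ is the function $f^{\nu(e)}$, so along the walk $\tau=v_{i_1}\cdots v_{i_k}$ the product is
\[
P(\tau)=\prod_{j=1}^{k-1} f^{\nu(v_{i_j}v_{i_{j+1}})} = f^{\sum_{j=1}^{k-1}\nu(v_{i_j}v_{i_{j+1}})} = f^{\nu(\tau)},
\]
where the last equality is exactly the extension of $\nu$ to walks given in~\eqref{eq:deftauonwalks}. The exponents add because $f$ is a single scalar-valued function, so powers of it multiply additively. Integrating over $\Sigma$ then yields $c_{ij}=\int_\Sigma f^{\nu(\tau)}\rd\mu$, as claimed.

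The \emph{only subtle point}, and the step I would be most careful about, is justifying that $C(A,b)$ is genuinely computed entrywise by this walk-sum formula in the \emph{continuum} setting: the entries of the $(A,b)$ pair are functions on $\Sigma$, and $c_{ij}=\int_\Sigma (A^{j-1}b)_i\,\rd\mu$. One must observe that matrix multiplication is done pointwise in $\sigma$ \emph{before} integrating, so that $(A(\sigma)^{j-1}b(\sigma))_i = \sum_\tau \prod_e (A,b)(\sigma)_e = \sum_\tau f(\sigma)^{\nu(\tau)}$ as an identity of functions of $\sigma$, and only afterwards does one integrate against $\mu$. Since the sum over walks is finite for each fixed $j$, interchanging sum and integral is immediate and no convergence issue arises. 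This is the place where the reduced-graph hypothesis does the real work, as it guarantees the walk is unique so the pointwise sum has a single term. Everything else is bookkeeping already packaged in Lemmas~\ref{lem:walks} and~\ref{lem:existuniquewalk}.
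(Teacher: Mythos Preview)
Your proposal is correct and follows essentially the same approach as the paper's proof: express the $i$th entry of $A^{j-1}b$ as $\sum_\tau f^{\nu(\tau)}$ over walks of length~$j$ from $\beta$ to $\alpha_i$, then invoke uniqueness of such walks. Your version is in fact slightly more careful than the paper's, since you handle the $\alpha_i\in G^*$ case separately (where uniqueness comes from $G^*$ being acyclic rather than from Lemma~\ref{lem:existuniquewalk}, which is stated only for $\alpha_i\in V_p^+$), and you make explicit that the walk-sum identity holds pointwise in~$\sigma$ before integrating.
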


\begin{proof}
The nonzero entries of $(A, b)$ are by construction $f^{\nu(e)}$, with $e\in G$ the corresponding edges. It follows that the $i$th entry of the vector $A^{j-1}b$ is given by 
$\sum_{\tau} f^{\nu(\tau)}$, where the sum is over all walks in $G$ of length~$j$ from $\beta$ to~$\alpha_i$. By Lemma~\ref{lem:existuniquewalk}, such a walk, if exists, is unique. This completes the proof.  
\end{proof}

\subsection{On a class of submatrices of $C(A, b)$}\label{ssec:submatricescab} 
In this subsection, we focus on a special class of submatrices of $C(A, b)$, and show that every such submatrix has full rank. The result will be used in the next subsection to establish that the matrix $C(A, b)$ itself has full rank. 

Let $C_j$ be the $j$th column of $C(A, b)$, so $c_{ij}$ is the $i$th entry of~$C_j$. We define the support of $C_j$ as 
$\supp (C_j) := \{\alpha_i \in V_\alpha \mid c_{ij} \neq 0\}$.  
By construction, $f$ is positive almost everywhere. It follows from Lemma~\ref{lem:computentries} that 
\begin{equation}\label{eq:suppequalvj}
\supp (C_j) = V(j) \quad \mbox{for all } j \geq 1. 
\end{equation}

Let $j^*$ be a positive integer such that
\begin{equation}\label{eq:jstar}
j^* > n^* \quad \mbox{and} \quad j^* > \max_{p = 1}^q (d_p - \ell_p),
\end{equation} 
so $j^*$ satisfies the hypothesis of Lemma~\ref{lem:repeatV}.  
Let $L$ be given as in~\eqref{eq:defmaxlcm}. 
Consider the column vectors $C_{j^* + kL}$, for $k\in \N$. By Lemma~\ref{lem:repeatV} and~\eqref{eq:suppequalvj}, all these columns share the same support. 
Let $V' = \{\alpha'_{1},\ldots, \alpha'_{m}\}$ be the common support, which is a subset of $V^+_{\rm cyc}$. 
Let $C'$ be the submatrix of $C$ obtained by first taking the columns $C_{j^* + kL}$, for all $k\in \N$, and then removing the zero rows, i.e.,   
\begin{equation}\label{eq:submatrixC'}
C':= \left [ C_{j^*}, C_{j^* + L}, C_{j^* + 2L}, \cdots  \right ] \big |_{V'}.
\end{equation}

Let $c'_{ij}$ be the $ij$th entry of $C'$. We can express $c'_{ij}$ explicitly. 
Since $\alpha'_j \in V'$, there exists a unique walk, denoted by $\tau_{ij}$, of length $(j^* + (i-1)L)$ from $\beta$ to $\alpha'_j$. Also, note that $\tau_{ij}$ can be obtained from $\tau_{1j}$ by inserting the closed walk $G_p^{(i - 1)L/\ell_p}$ for some $p = 1,\ldots, q$. Thus, by Lemma~\ref{lem:degreesforCAB}, 
$$
\nu(\tau_{ij}) = \nu(\tau_{1j}) + \frac{(i-1)L}{\ell_p} \nu(G_p) = \nu(\tau_{1j}) + i - 1.
$$
For ease of notation, let $\tau_j:= \tau_{1j}$. 
It then follows from Lemma~\ref{lem:computentries} that
\begin{equation}\label{eq:entriesofC'}
c'_{ij} = \int_\Sigma f^{\nu(\tau_j) + i-1} \rd \mu.  
\end{equation}

We will now establish the following result: 

\begin{Lemma}\label{lem:fullranksubmatrix}
The submatrix $C'$ given in~\eqref{eq:submatrixC'} has full rank (i.e., rank $m$). 
\end{Lemma}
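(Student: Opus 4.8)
The plan is to exhibit $C'$ as a product of a diagonal matrix (which is invertible) and a matrix whose columns are scaled \emph{moment vectors}, and then to prove that the moment matrix is nonsingular by a Vandermonde-type argument exploiting the linear independence of the powers $\{f^k\}$ established just before Lemma~\ref{lem:spanf}.

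First I would use the explicit formula~\eqref{eq:entriesofC'}: the $ij$th entry is $c'_{ij} = \int_\Sigma f^{\nu(\tau_j)+i-1}\,\rd\mu$. The key structural observation is that the exponent factors as a fixed row-contribution $(i-1)$ plus a fixed column-contribution $\nu(\tau_j)$; that is, $c'_{ij} = \int_\Sigma f^{i-1}\,f^{\nu(\tau_j)}\,\rd\mu$. So I would think of $C'$ as an infinite-by-$m$ matrix whose $(i,j)$ entry is the pairing $\langle f^{i-1}, f^{\nu(\tau_j)}\rangle$ under the measure $\mu$, where the $m$ functions $g_j := f^{\nu(\tau_j)}$ index the columns. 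The crucial point, which I would verify using the decomposition $\nu(\tau_j)=\nu_1(\tau_j)\lambda + \nu_2(\tau_j)\tfrac1L$ from~\eqref{eq:defnu1}--\eqref{eq:defnu2} together with $\lambda$ irrational, is that the $m$ exponents $\nu(\tau_1),\dots,\nu(\tau_m)$ attached to the distinct nodes $\alpha'_1,\dots,\alpha'_m$ of the common support $V'$ are \emph{pairwise distinct}. This distinctness is what makes the column functions $g_1,\dots,g_m$ linearly independent over $\Sigma$.

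The main step is then to show full rank. To do this I would pick $m$ rows, say $i=1,\dots,m$, forming the square matrix with entries $\int_\Sigma f^{(i-1)+\nu(\tau_j)}\,\rd\mu$, and argue its determinant is nonzero. Suppose for contradiction some nontrivial combination of the columns vanishes: $\sum_{j} \beta_j\,C'_{(\cdot),j} = 0$. Because this must hold for every selected row $i$, and in fact for \emph{all} powers $i-1 \in \N$ (not just finitely many rows, since $C'$ has infinitely many rows indexed by $k\in\N$), I would deduce that $\int_\Sigma f^{i-1}\bigl(\sum_j \beta_j f^{\nu(\tau_j)}\bigr)\,\rd\mu = 0$ for all $i\geq 1$. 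Setting $h := \sum_j \beta_j f^{\nu(\tau_j)}$, this says $h$ is orthogonal to every $f^{i-1}$, hence to the whole space $\bF$ (the uniform closure of $\mathrm{span}\{f^k\}$). I would then argue that $h$ itself lies in $\bF$ — using Lemma~\ref{lem:spanf}, since each $f^{\nu(\tau_j)}$ is a continuous function depending on $\sigma$ only through $\|\phi(\sigma)\|$ on $K$ and equal to $1$ off $K$ — so that $h \perp \bF$ and $h\in\bF$ together with positivity of $f$ (and the fact that $\mu$ charges an open set inside $K$) force $\int_\Sigma h^2\,\rd\mu = 0$, whence $h\equiv 0$ $\mu$-a.e. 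Finally, $h\equiv 0$ with the exponents $\nu(\tau_j)$ pairwise distinct contradicts the linear independence of the distinct powers $\{f^t\}$, so all $\beta_j = 0$.

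The hard part will be the two-step independence argument: establishing rigorously that the exponents $\nu(\tau_j)$ are pairwise distinct across the $m$ nodes of $V'$, and then upgrading linear independence of the \emph{powers} $f^t$ (for distinct real exponents $t$) to the statement that the corresponding integrals against all $f^{i-1}$ cannot all cancel. The distinctness of exponents is where the irrationality of $\lambda$ is essential: two nodes in the same cycle-subtree $G_p^+$ get exponents differing in the rational part by $\tfrac1L$ times an integer, while nodes in different appendices or with different $p_0$ differ in the irrational multiple of $\lambda$; I would check these cases against~\eqref{eq:defnu1}--\eqref{eq:defnu2} to confirm no two distinct nodes of $V'$ receive equal total exponent. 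Once distinctness is in hand, the orthogonality-plus-membership argument against $\bF$ (via Lemma~\ref{lem:spanf} and Stone--Weierstrass) closes the rank claim cleanly.
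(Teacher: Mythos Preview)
Your approach is essentially the paper's: establish that the exponents $\nu(\tau_1),\ldots,\nu(\tau_m)$ are pairwise distinct, then argue that a nontrivial linear relation forces $h:=\sum_j v_j f^{\nu(\tau_j)}\in\bF$ to be $L^2(\mu)$-orthogonal to every $f^k$, hence to all of $\bF$, hence $h\equiv 0$, and conclude by linear independence of distinct real powers of~$f$.

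The one place your sketch goes astray is the distinctness case analysis. You assert that two nodes of $V'$ lying in the same $G_p^+$ must have exponents differing in the \emph{rational} part $\nu_2$. That fails in the critical subcase where both nodes lie on the cycle $G_p$ itself: by Lemma~\ref{lem:existuniquewalk}, membership in $V(j^*)$ forces $\dep(\alpha'_j)\equiv_{\ell_p}j^*$, and since the depths along $G_p$ run through $\ell_p$ consecutive integers this pins down a \emph{unique} node of $G_p$; so two such nodes would coincide, and in particular $\nu_2$ cannot separate them. The paper's argument is exactly this: two distinct nodes of $V'$ can never lie on the same cycle, and once that is known,~\eqref{eq:defnu1} gives $\nu_1(\tau_j)\neq\nu_1(\tau_{j'})$ for all $j\neq j'$ directly (distinct cycles have distinct entry indices $p_0$, and appendix nodes carry their own index~$i$), after which irrationality of $\lambda$ yields $\nu(\tau_j)\neq\nu(\tau_{j'})$. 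With this correction your proof is complete and coincides with the paper's.
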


\begin{proof}
The proof has two parts. In the first part, we show that $\nu(\tau_1),\ldots,\nu(\tau_m)$ are pairwise distinct. Building upon this fact and Lemma~\ref{lem:spanf}, we show in the second part that the matrix $C'$ has full rank. 

\xc{Part 1: Proof that $\nu(\tau_1),\ldots,\nu(\tau_m)$ are pairwise distinct.} 
For any two numbers $\nu(\tau_j)$ and $\nu(\tau_{j'})$, with $j\neq j'$, we show that their difference is nonzero. 
From Lemma~\ref{lem:degreesforCAB}, we have that
\begin{equation}\label{eq:diffnu}
\nu(\tau_j) - \nu(\tau_{j'}) = \\ (\nu_1(\tau_j) - \nu_1(\tau_{j'}))\lambda + (\nu_2(\tau_j) - \nu_2(\tau_{j'}))\frac{1}{L},
\end{equation}
where $\nu_1$ and $\nu_2$ are defined in~\eqref{eq:defnu1} and~\eqref{eq:defnu2}, respectively. 
Since $\lambda$ is irrational, to show that~\eqref{eq:diffnu} is nonzero, it suffices to show that $\nu_1(\tau_j) \neq \nu_1(\tau_{j'})$.

The ending nodes  of $\tau_j$ and $\tau_{j'}$ are $\alpha'_{j}$ and $\alpha'_{j'}$, respectively. 
We claim that $\alpha'_{j}$ and $\alpha'_{j'}$ do not belong to the same cycle. 
Suppose not, say $\alpha'_j, \alpha'_{j'}\in G_p$ for some $p = 1,\ldots, q$; 
then, by Lemma~\ref{lem:walks} and by the fact that $\tau_j$ and $\tau_{j'}$ have the same length~$j^*$,  
the two walks $\tau_j$ and $\tau_{j'}$ have to be the same:
$$
\tau_j = \tau_{j'} = \tau_{\beta\alpha_{p_0}} G_p^m \tau_{\alpha_{p_0}\alpha_{p_k}},
$$
where $\alpha_{p_k}$ is the unique node in $G_p$ such that $k \equiv_{\ell_p} j^*- \dep(\alpha_{p_0})$. But then, $\alpha'_{j} = \alpha'_{j'} = \alpha_{p_k}$, which is a contradiction. Since $\alpha'_{j}$ and $\alpha'_{j'}$ do not belong to the same cycle, we conclude from~\eqref{eq:defnu1} that $\nu_1(\tau_j) \neq \nu_1(\tau_{j'})$. 

\xc{Part 2: Proof that $C'$ has full rank.} We show that the only solution $v\in \R^m$ to $v^\top C'$ is~$v = 0$. We write $v  = (v_1,\ldots, v_m)$ and let $g : = \sum_{j = 1}^m v_j f^{\nu(\tau_j)}$. Then, by~\eqref{eq:entriesofC'}, $v^\top M = 0$ implies that 
\begin{equation}\label{eq:riesz}
\int_\Sigma g f^k \rd \mu = 0, \quad \mbox{for all } k \in \N.  
\end{equation}
By Lemma~\ref{lem:spanf}, any function $f^r$, for $r\in \R_{\geq 0}$, belongs to $\bF$. Thus, $g$ belongs to $\bF$ as well. 
Since the uniform closure of the linear span of $\{f^k\}_{k\in \N}$ is $\bF$,~\eqref{eq:riesz} can hold if and only if $g = 0$. 
Because $\nu(\tau_1), \ldots, \nu(\tau_m)$ are pairwise distinct, the functions 
$f^{\nu(\tau_1)}, \ldots, f^{\nu(\tau_m)}$ are linearly independent. Thus, $\sum_{j = 1}^m v_jf^{\nu(\tau_j)} = 0$ can hold if and only if $v = 0$. 
\end{proof}

\subsection{Proof that $(A, b)$ is averaged controllable}\label{ssec:cabavecontrol}
In this subsection, we complete the proof for the sufficiency part of Theorem~\ref{thm:main}. It remains to establish the following result:

\begin{Proposition}\label{prop:sufficiency}
    The matrix $C(A, b)$ has full rank.
\end{Proposition}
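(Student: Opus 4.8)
The plan is to show that $C(A,b)$ has rank $n$ by exhibiting, among its infinitely many columns, a selection of $n$ columns that together span $\R^n$. The natural strategy is to decompose the node set $V_\alpha$ according to the structure of $G$: the core nodes $V^*$ (which form the path $G^*$) and the cyclic-successor nodes $V^+_{\rm cyc}$, the latter being partitioned into the pieces $V^+_p$ via~\eqref{eq:partitionofGplus}. For the core nodes, I would use the first $n^*$ columns $C_1,\ldots,C_{n^*}$. Since $G^*$ is a path with $\beta$ the root, the node of depth $k$ is reached by a unique path of length $k$, and by Lemma~\ref{lem:computentries} together with $\nu(\tau)=0$ on $G^*$, each such entry equals $\int_\Sigma f^0 \rd\mu = 1$. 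These first $n^*$ columns thus have a triangular support structure (column $j$ has its ``highest new'' nonzero entry at the depth-$j$ core node), so they are linearly independent and span $\R^{V^*}$ modulo the contributions from cyclic nodes.

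The remaining $n - n^*$ dimensions must come from the nodes in $V^+_{\rm cyc}$, and this is where Lemma~\ref{lem:fullranksubmatrix} does the heavy lifting. The idea is to use the ``tail'' columns $C_{j^*+kL}$ for $k\in\N$, which by Lemma~\ref{lem:repeatV} all share the common support $V'\subseteq V^+_{\rm cyc}$. Lemma~\ref{lem:fullranksubmatrix} guarantees that the submatrix $C'$ formed from these columns, restricted to rows $V'$, has full rank $m = |V'|$. The key subtlety is that $V'$ is only one of the $\ell_{\max}$ cyclically-recurring reachable sets $V(j^*+k)$; to cover \emph{all} of $V^+_{\rm cyc}$, I would run the same argument for each residue class $k = 0,\ldots,\ell_{\max}-1$, applying Lemma~\ref{lem:fullranksubmatrix} to the family $\{C_{(j^*+k)+kL}\}$ with its own common support $V(j^*+k)$, and conclude that each such block of columns spans $\R^{V(j^*+k)}$ on its support rows. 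Taking the union over $k$ and invoking~\eqref{eq:partitionofVplus} shows these columns collectively span $\R^{V^+_{\rm cyc}}$.

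I anticipate the main obstacle to be the \emph{assembly} step: combining the core-column span with the cyclic-column spans into a single full-rank argument over all of $\R^n$. The difficulty is that the cyclic columns $C_{j^*+kL}$ are \emph{not} supported only on $V^+_{\rm cyc}$ in a way that is automatically decoupled from the core rows --- one must check that the block structure is genuinely triangular. Here the depth ordering of the relabeled nodes is essential: since $j^* > n^*$, the tail columns vanish on all core nodes (as $V(j)\subseteq V^+_{\rm cyc}$ for $j>n^*$, by the first line of the proof of Lemma~\ref{lem:repeatV}), so the tail columns have zero entries in the core rows entirely. This gives a clean block-triangular decomposition: the core columns handle the $V^*$ rows, and the tail columns handle the $V^+_{\rm cyc}$ rows with no interference in the core rows. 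The crucial consistency check is the condition~\eqref{eq:notcontain}, namely $V(j+k)\subsetneq V^+_{\rm cyc}-V(j+k)$, which is the key nondegeneracy ensuring the supports across residue classes genuinely tile $V^+_{\rm cyc}$ without a single residue class being forced to carry rank it cannot supply.

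Concretely, the steps in order are: (1) argue the first $n^*$ columns have full rank on the core rows and vanish outside a controlled support; (2) fix each residue $k\in\{0,\ldots,\ell_{\max}-1\}$, apply Lemma~\ref{lem:fullranksubmatrix} to obtain rank $|V(j^*+k)|$ on the rows $V(j^*+k)$; (3) invoke~\eqref{eq:partitionofVplus} to see that $\cup_k V(j^*+k)=V^+_{\rm cyc}$, so the union of these column-blocks achieves rank $|V^+_{\rm cyc}| = n - n^*$ on the $V^+_{\rm cyc}$ rows; (4) combine via the block-triangular structure (core columns vanish nowhere essential, tail columns vanish on core rows) to conclude $\rk C(A,b) = n^* + (n-n^*) = n$. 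By Lemma~\ref{lem:necsufforavectrl}, this establishes averaged controllability and completes the sufficiency direction of Theorem~\ref{thm:main}.
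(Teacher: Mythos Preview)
Your proposal is correct and follows essentially the same architecture as the paper: handle the $n^*$ core nodes with the first $n^*$ columns (each equal to a Kronecker-delta pattern since $\nu\equiv 0$ on $G^*$), then cover $V^+_{\rm cyc}$ using the residue-class families $\{C_{j^*+\ell+kL}\}_{k\in\N}$ together with Lemma~\ref{lem:fullranksubmatrix}, and combine via the block-triangular observation that tail columns vanish on core rows because $V(j)\subseteq V^+_{\rm cyc}$ once $j>n^*$.

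The only substantive difference is in the assembly of the cyclic part. You argue by \emph{span}: each residue-class block, having support exactly $V(j^*+\ell)$ and full rank there, spans the coordinate subspace $\R^{V(j^*+\ell)}\subseteq\R^n$; summing over $\ell$ and using $\cup_\ell V(j^*+\ell)=V^+_{\rm cyc}$ gives the whole of $\R^{V^+_{\rm cyc}}$. The paper instead builds the disjoint refinement $V'_\ell:=V(j^*+\ell)\setminus\bigcup_{k<\ell}V(j^*+k)$, notes that columns from $\mathbf C_k$ with $k<\ell$ vanish on rows $V'_\ell$, and thereby obtains a block upper-triangular selection of exactly $n-n^*$ columns. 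Your span argument is a legitimate shortcut (and arguably cleaner), since it avoids introducing the $V'_\ell$'s; the paper's version has the cosmetic advantage of actually exhibiting $n$ specific columns, matching what you announce in your first sentence.

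One small correction: you do \emph{not} need condition~\eqref{eq:notcontain} anywhere, and your description of its role is off. The covering statement $\cup_\ell V(j^*+\ell)=V^+_{\rm cyc}$ from~\eqref{eq:partitionofVplus} is all that your span argument requires; no further ``nondegeneracy'' is needed because overlap among the $V(j^*+\ell)$'s is harmless when one is summing subspaces rather than counting dimensions. The paper's proof does not invoke~\eqref{eq:notcontain} either.
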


\begin{proof}
We exhibit below $n$ linearly independent column vectors out of $C(A, b)$.

Recall that $n^*$ is the order of the core $G^*$. 
We claim that the first $n^*$ columns of $C(A, b)$ are linearly independent. 
    To wit, note that $G^*$ is a path without self-loop. 
    For any node $\alpha_i\in G^*$, the $i$th row has a unique {\em nonzero} entry, namely, $c_{i,\dep(\alpha_i)}$. More specifically, we have that
    \begin{equation*}\label{eq:rowforGplus}
    c_{i j} = \delta_{\dep(\alpha_i), j} \int_\Sigma f^{\nu(\tau_{\beta\alpha_i})} \rd \mu  \\ = \delta_{\dep(\alpha_i), j} \int_{\Sigma} \mathbf{1} \rd \mu = \delta_{\dep(\alpha_i), j}, 
    \end{equation*}
    where $\delta$ is the Kronecker delta. This establishes the claim. If $n^* = n$, then the proof is complete. Otherwise, 
    we let $n^+ :=  n - n^*$ be the order of $G^+_{\rm cyc}$.  
    Let $j^*$ be a positive integer satisfying~\eqref{eq:jstar}.  By Lemma~\ref{lem:repeatV}, $V(j)$ does not intersect $V^*$ for all $j \geq j^*$.  
    Thus, if there exist $n^+$ linearly independent vectors out of $\{C_j\}_{j \geq j^*}$, then these $n^+$ vectors, together with the first $n^*$ columns of $C(A, b)$, form a basis of $\R^n$. We exhibit below these $n^+$ columns.

    Let $\ell_{\max}$ and $L$ be given as in~\eqref{eq:defmaxlcm}.   
    We define subsets $V'_\ell$, for $\ell = 0,\ldots, \ell_{\max} -1$, of $V^+_{\rm cyc}$ as follows: Set $V'_0 := V(j^*)$ and 
    $$
    V'_\ell := V(j^* + \ell) - \cup_{k = 0}^{\ell-1} V(j^* + k), \quad \mbox{for } 1\leq \ell \leq  \ell_{\max}-1. 
    $$
    By Lemma~\ref{lem:repeatV}, these subsets are nonempty and form a partition of $V^+_{\rm cyc}$.  
    Let $m_\ell := |V'_\ell|$, so $n^+ = \sum_{\ell = 0}^{\ell_{\max}-1} m_\ell$. 
    Next, for each $\ell = 0,\ldots, \ell_{\max}-1$, we define subsets of $\R^n$ as 
    $$\mathbf{C}_\ell:=\{C_{j^* + \ell + kL}\mid k\in \N\}.$$  
    Since $\ell_{\max}\leq L$, if $\ell \neq \ell'$, then $\mathbf{C}_\ell \cap \mathbf{C}_{\ell'} = \varnothing$. We will now select for each $\ell = 0,\ldots, \ell_{\max} - 1$, $m_\ell$ columns out of $\mathbf{C}_\ell$, and show that the total $n^+$ columns are linearly independent. 
    
    By Lemma~\ref{lem:repeatV} and~\eqref{eq:suppequalvj}, all the columns in $\mathbf{C}_\ell$ share the same support, given by $V(j^* + \ell)$, which contains $V'_\ell$ as a subset. 
    Since the subsets $V'_\ell$ form a partition of $V^+_{\rm cyc}$, 
    it suffices to show that for each $\ell$, there exist $m_\ell$ vectors $C_{j_1},\ldots, C_{j_{m_\ell}}$ out of $\mathbf{C}_\ell$ such that the sub-vectors
    $
    C_{j_1} |_{V'_\ell},\ldots,C_{j_{m_\ell}} |_{V'_\ell}
    $
    are linearly independent. 
    But this follows from Lemma~\ref{lem:fullranksubmatrix}: To wit, the following submatrix of $C(A, b)$: 
    \begin{equation}\label{eq:submatrixl}
    \left [C_{j^*+\ell}, C_{j^* + \ell + L}, C_{j^* + \ell + 2L}, \cdots \right ] \big |_{V(j^* +\ell)} 
    \end{equation}
    has full rank, so its rows are linearly independent. It follows that if replace $V(j^* + \ell)$ with its subset $V'_\ell$, then the resulting submatrix is still of full rank, i.e., of rank~$m_\ell$. This completes the proof.   
\end{proof}    

\section{Conclusions}
We have provided a complete characterization of sparsity patterns $G$ that can sustain averaged controllability within the class of linear ensemble systems with single inputs. Specifically, we have shown in Theorem~\ref{thm:main} that it is necessary and sufficient that the core $G^*$ (introduced in Definition~\ref{def:core})  contains a directed spanning path for $G$ to be structurally averaged controllable. We will extend the result to the multi-input case in future work.

\printbibliography


\end{document}